\title[Existence of Weak Solutions to Kinetic Flocking Model]{Existence of Weak Solutions to Kinetic Flocking Model with Cut-off Interaction Function}
\author[Jin]{Chunyin Jin}
\address[Chunyin Jin]{\newline Institute of Applied Mathematics
   \newline AMSS, Academia Sinica, Beijing 100190, P. R. China}
\email{jinchunyin@amss.ac.cn}
\newtheorem{theorem}{Theorem}[section]
\newtheorem{definition}{Definition}[section]
\newtheorem{lemma}{Lemma}[section]
\newcommand{\bbr}{\mathbb R}
\newcommand{\e}{\varepsilon}
\newcommand{\mf} {\mathcal F}
\newcommand{\mx} {\mathcal X}
\newcommand{\bx}{\mbox{\boldmath $x$}}
\newcommand{\by}{\mbox{\boldmath $y$}}
\newcommand{\bu}{\mbox{\boldmath $u$}}
\newcommand{\bv}{\mbox{\boldmath $v$}}
\newcommand{\bw}{\mbox{\boldmath $w$}}
\newcommand{\bj}{\mbox{\boldmath $j$}}
\newcommand{\bm}{\mbox{\boldmath $m$}}
\begin{document}

\date{\today}

\keywords{ Cucker-Smale model, flocking, kinetic model, velocity averaging lemma, fixed point}

\thanks{2010 Mathematics Subject Classification: 35D30, 35Q83, 35Q92, 74H20}

\begin{abstract}
We prove the existence of weak solutions to kinetic flocking model with cut-off interaction
function by using Schauder fixed pointed theorem and velocity averaging lemma. Under the natural assumption
that the velocity support of the initial distribution function is bounded, we show that the velocity support of the distribution function is uniformly bounded in time. Employing this property, we remove the constraint
in the paper of Karper, Mellet and Trivisa[SIAM. J. Math. Anal., (45)2013, pp.215-243] that the initial
distribution function should have better integrability for large $|\bx|$.
\end{abstract}

\maketitle \centerline{\date}
%
%
\section{Introduction}
\setcounter{equation}{0}
In this paper, we consider the existence of weak solutions for the following kinetic flocking model
with cut-off interaction function:
\begin{equation} \label{eq-cut-kine}
     \begin{dcases}
         f_t + \bv \cdot \nabla_{\bx} f+ \lambda \nabla_{\bv} \cdot[(\bu(t,\bx)-\bv)f]=0,\\
         f|_{t=0}=f_0(\bx,\bv),
     \end{dcases}
\end{equation}
where $f(t,x,v)$ is the distribution function and $\lambda$ is a positive constant denoting the coupling
strength. We define
\[
 \bj_{r}(t,\bx)=\int_{|\bx-\by|<r}\int_{\bbr^d}f(t,\by,\bw)\bw d\bw d\by, \quad \rho_{r}(t,\bx)=\int_{|\bx-\by|<r}\int_{\bbr^d}f(t,\by,\bw)d\bw d\by,
\]
where $r>0$ denotes the neighborhood radius. Then $\bu(t,x)$ is defined by
\begin{equation} \label{eq-def-bu}
   \bu(t,\bx)=
     \begin{dcases}
         \frac{\bj_{r}(t,\bx)}{\rho_{r}(t,\bx)}, \quad &\rho_{r}(t,\bx)\neq 0,\\
         0, \quad &\rho_{r}(t,\bx)=0.
     \end{dcases}
\end{equation}

This model is derived formally from the particle model by taking mean-field limit. Now let us review some background related to it.

Collective behaviors are common phenomena in nature, such as flocking of birds, swarming of fish and herding of sheep. These phenomena have drawn much attention from researchers in Biology, Physics and Mathematics. They try to understand the mechanisms that lead to the above phenomena via modeling, numerical simulation and mathematical analysis.

Among them, Vicsek et al. \cite{vicsek1995novel} put forward a simple discrete model. It is composed of $N$ autonomous agents moving in the plane with the same speed $v$. Their positions $(x_i, y_i)(1 \le i \le N)$ and headings $\theta_i (1 \le i \le N)$ are updated as follows:
\begin{equation} \label{eq:vsk}
     \begin{dcases}
         x_i(t+1) = x_i(t)+ v \cos \theta_i(t),\\
         y_i(t+1) = y_i(t)+ v \sin \theta_i(t), \quad i= 1,2, \cdots,
         N,
     \end{dcases}
\end{equation}
\[
\theta_{i}(t+1)= \arctan \frac{\sum_{j \in \mathcal{N}_i(t)}\sin
\theta_j(t)}{\sum_{j \in \mathcal{N}_i(t)}\cos \theta_j(t)},
\]
where $\mathcal{N}_i(t)=\left\{ j: \sqrt{(x_j(t)-x_i(t))^2+(y_j(t)-y_i(t))^2 }< r \right\}$ denotes the neighbors of agent $i$ at the instant $t$.

Through simulations, Vicsek et al. found that this system can synchronize, that is, all agents move in the same direction when the density is large and the noise is small. Following this, mathematicians have tried to give a rigorous theoretical analysis. They found that the connectivity of the neighbor graph is crucial in the proof, cf. \cite{jadbabaie2003coordination}\cite{ZhiXinLiu2008Connectivity}. However, the verification of connectivity is difficult in general. One way to avoid this difficulty is to modify the Vicsek model from local interactions to global ones. In 2007, Cucker and Samle \cite{cucker2007emergent} proposed the following model:
\begin{equation} \label{eq:cs}
     \begin{dcases}
         \frac{d \bx_i}{dt}=\bv_i,\\
         \frac{d \bv_i}{dt}=\frac{\lambda}{N} \sum_{j=1}^{N}\psi(|\bx_j-\bx_i|)(\bv_j-\bv_i), \quad i= 1,2, \cdots, N,
     \end{dcases}
\end{equation}
where $\psi(\cdot)$ is a positive non-increasing function denoting the interactions between agents. However, in reality each agent can only detect the information around it, so a more realistic requirement is to assume $\psi(\cdot)$ is a cut-off function. Combining the advantages of the above two models, recently Huang and Jin \cite{jin15particle} got the following model:
\begin{equation} \label{eq:v-cs}
     \begin{dcases}
         \frac{d \bx_i}{dt}=\bv_i,\\
         \frac{d \bv_i}{dt}=\frac{\lambda}{N_i(t)} \sum_{j=1}^{N}\chi_r(|\bx_j-\bx_i|)(\bv_j-\bv_i), \quad i= 1,2, \cdots, N,
     \end{dcases}
\end{equation}
where
\begin{align*}
  N_i(t)&=card\{j: |\bx_j-\bx_i|<r\}, \\
  \chi_r(s)&=
        \begin{dcases}
         1, \quad &|s|<r,\\
         0, \quad &|s| \ge r.
     \end{dcases}
\end{align*}
They established the global flocking for this system under the condition that the initial configurations are close to the flocking state and got the convergence rate.

However, when the number of agents is large, it is impossible to establish an ODE for each agent. Following the strategy from statistical physics, we introduce a kinetic description for flocking. Let the empirical distribution function
\[
 f^N(t,\bx,\bv)=\frac 1N \sum_{i=1}^{N}\delta(\bx-\bx_i(t))\otimes \delta(\bv-\bv_i(t)).
\]
Then $f^N(t,\bx,\bv)$ satisfies
\[
  \frac{\partial f^N}{\partial t}+\bv \cdot \nabla_{\bx} f^N+ \lambda \nabla_{\bv} \cdot
  \left[ \left(\frac{\int_{|\bx-\by|<r}\int_{\bbr^d}\bw f^N(t,\by,\bw) (d\bw, d\by)}{\int_{|\bx-\by|<r}\int_{\bbr^d}f^N(t,\by,\bw)(d\bw, d\by)}-\bv \right)f^N \right]=0
\]
in the sense of distributions. Formally taking the limit results in the kinetic model we consider.

Besides, this model can also be observed from another perspective. Motsch and Tadmor \cite{motsch2011new} also noticed the shortcomings of the C-S model. For example, if a small group is located far from a much larger group, then the dynamics of the small group is almost halted because of the normalization factor $\frac 1N$ in \eqref{eq:cs}, which is unreasonable. To remedy this deficiency, they proposed a new model, given by
\begin{equation} \label{eq:mt}
     \begin{dcases}
         \frac{d \bx_i}{dt}=\bv_i,\\
         \frac{d \bv_i}{dt}=\frac{\lambda}{\sum_{j=1}^{N}\psi(|\bx_j-\bx_i|)} \sum_{j=1}^{N}\psi(|\bx_j-\bx_i|)(\bv_j-\bv_i), \quad i= 1,2, \cdots, N.
     \end{dcases}
\end{equation}
Similarly, they derived the kinetic model
\begin{equation} \label{eq-kinetic-mt}
  f_t+\bv \cdot \nabla_{\bx} f+ \lambda \nabla_{\bv} \cdot (f L[f])=0,
\end{equation}
where $\displaystyle L[f](t, \bx, \bv)= \frac{\int_{\bbr^d} \int_{\bbr^d}\psi(|\bx-\by|)f(t, \by, \bw)(\bw-\bv)d \bw d \by}{\int_{\bbr^d} \int_{\bbr^d}\psi(|\bx-\by|)f(t, \by, \bw)d \bw d \by}$.

In the above model, $\psi$ is smooth and is defined in the whole space. However, if we let $\psi(s)=\chi_r(s)$, then it also reduces to the situation we consider.

Recently, Karper, Mellet and Trivisa in \cite{karper2013existence} studied a more general model, which is of the form
\[
  f_t+\bv \cdot \nabla_{\bx} f+\nabla_{\bv} \cdot (f F[f])+\beta \nabla_{\bv} \cdot [f(\bu-\bv)]=\sigma \Delta_{\bv}f-\nabla_{\bv} \cdot[(a-b|\bv|^2)f \bv],
\]
where
\begin{align*}
  &F[f](t,\bx, \bv)=\int_{\bbr^d} \int_{\bbr^d}\psi(|\bx-\by|)f(t, \by, \bw)(\bw-\bv)d \bw d \by, \\
  &\bu(t,\bx)=\frac{\int_{\bbr^d}f(t,\bx, \bv) \bv d \bv}{\int_{\bbr^d}f(t,\bx, \bv) d \bv} \quad \text{and} \quad \beta, \sigma >0.
\end{align*}

They proved the existence of weak solutions for the above equation. Then by establishing the necessary a priori estimate that holds for the solutions of \eqref{eq-kinetic-mt}, they got the following theorem. For simplicity, we use the notations in this paper and just state the main content.
\vskip 0.3cm
\noindent \textbf{Theorem 1.3}[Karper-Mellet-Trivisa, SIAM. J. Math. Anal.(45)2013, pp.215-243]\\
\textit{Assume that $f_0 \ge 0$ satisfies
\[
  f_0 \in L^1(\bbr^{2d})\cap L^{\infty}(\bbr^{2d}) \quad \text{and} \quad (|\bv|^2+|\bx|^2)f_0 \in L^1(\bbr^{2d}).
\]
Suppose that $\psi$ is a smooth non-negative function such that
\[
  \psi(x)>0 \ \text{for} \ |x|\le r, \quad \psi(x)=0 \ \text{for} \ |x|\ge R.
\]
Then there exists a weak solution to \eqref{eq-kinetic-mt} in the sense of distributions.}
\vskip 0.3cm
\noindent In fact, the above theorem was established by vanishing $\sigma$ method since the \textit{a priori} estimate is independent of $\sigma$.

So far, nearly all the literature about flocking concerned smooth interaction function. In this paper, we study a cut-off situation. We consider \eqref{eq-cut-kine} under the condition that the velocity support of the initial distribution function $f_0$ is bounded by $M_0$. This condition is natural in view of its derivation. Since the particle agents have bounded velocities initially, it is reasonable to assume that the mean-field limit $f_0$ has bounded velocity support. Then by using our technical Lemma 2.1, we show the velocity support of $f(t, \bx, \bv)$ is uniformly bounded in time. Employing this property, we remove the constraint that $|\bx|^2f_0 \in L^1(\bbr^{2d})$ in Theorem 1.3 [Karper-Mellet-Trivisa, SIAM. J. Math. Anal. 2013]. This result cannot be established by vanishing $\sigma$ method as above because $\sigma>0$ will change the type of the equation, which disables us to use characteristics method  to show that the velocity support is uniformly bounded.

Next we give the definition of the weak solution and present our main theorem.
\begin{definition}
Let $0 \le f_0(\bx, \bv)\in L^1(\bbr^{2d})\cap L^{\infty}(\bbr^{2d})$ and $T>0$. Then $f(t,\bx, \bv) \in L^{\infty}([0,T],L^1(\bbr^{2d}))\cap L^{\infty}([0,T] \times \bbr^{2d})$ is a weak solution of \eqref{eq-cut-kine} if
\[
   f_t + \bv \cdot \nabla_{\bx} f+ \lambda \nabla_{\bv} \cdot[(\bu(t,\bx)-\bv)f]=0 \quad \text{in} \ \mathcal{D}'((0,T)\times \bbr^{2d})
\]
and $f|_{t=0}=f_0(\bx,\bv)$ for a.e. $(\bx, \bv) \in \bbr^{2d}$.
\end{definition}

Denote
\[
  M(t)=\max\{|\bv|: \ (\bx,\bv) \in \text{supp} f(t,\cdot,\cdot)\}.
\]
Then we have the following theorem.
\begin{theorem}
Assume $0 \le f_0(\bx, \bv)\in L^1(\bbr^{2d})\cap L^{\infty}(\bbr^{2d})$ and $M_0$ is bounded. Then \eqref{eq-cut-kine} admits a weak solution $f(t, \bx,\bv) \in L^{\infty}([0,T],L^1(\bbr^{2d}))\cap L^{\infty}([0,T] \times \bbr^{2d})$, $\forall T>0$. Besides, $f(t,\bx,\bv)$ and $M(t)$ satisfy
  \begin{eqnarray*}
    &&(i)~ 0 \le f(t,\bx, \bv) \le \|f_0\|_{L^{\infty}(\bbr^{2d})} e^{\lambda dt} \quad \text{for} \ a.e. \ (t,\bx, \bv) \in [0,T] \times \bbr^{2d} \ and \\ &&~ \quad f(t,\bx, \bv) \in C([0,T],L^1(\bbr^{2d}))\cap L^{\infty}([0,T] \times \bbr^{2d}), \\
    &&(ii)~M(t) \le M_0,\\
    &&(iii)~\|f(t)\|_{L^{p}(\bbr^{2d})} \le e^{\frac{\lambda d(p-1)t}{p}}\|f_0\|_{L^{p}(\bbr^{2d})}, \ 1 \le p< \infty, \quad \forall t \in [0,T].
  \end{eqnarray*}
\end{theorem}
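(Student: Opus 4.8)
The plan is to construct the weak solution by a Schauder fixed point argument combined with the velocity averaging lemma, following the scheme announced in the introduction. First I would fix $T>0$ and set up an iteration map on a suitable closed convex set. Given a candidate density $g$, one solves the \emph{linear} transport problem
\[
   f_t + \bv\cdot\nabla_{\bx} f + \lambda\nabla_{\bv}\cdot[(\bu_g(t,\bx)-\bv)f] = 0,\qquad f|_{t=0}=f_0,
\]
where $\bu_g$ is built from $g$ via \eqref{eq-def-bu}. Because $\chi_r$ is merely a cut-off, $\bu_g$ is not smooth, so I would first regularize: replace $\chi_r$ by a mollified kernel and $f_0$ by a smooth approximation, solve the linear equation along characteristics (the velocity field $\bv\mapsto \bu_g-\bv$ is Lipschitz in $\bv$ uniformly, and the $\bx$-dependence is controlled once $\rho_r$ is bounded below or the $0/0$ convention is handled), and obtain a solution operator $g\mapsto f=:\mathcal{T}(g)$. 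The a priori bounds (i)--(iii) are produced at this stage: $0\le f\le \|f_0\|_{\infty}e^{\lambda dt}$ and the $L^p$ estimate follow from the $\nabla_{\bv}\cdot\bv$ term contributing $\lambda d$ in the maximum principle / from multiplying by $|f|^{p-1}$ and integrating the $\nabla_{\bv}\cdot(\bv f)$ term by parts; conservation of mass gives the $L^1$ bound.

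The crucial structural input is (ii), $M(t)\le M_0$, which is exactly where Lemma 2.1 enters. Along a characteristic $(\bX(t),\bV(t))$ one has $\dot{\bV} = \lambda(\bu_g(t,\bX(t))-\bV)$, and since $\bu_g$ is an average of velocities $\bw$ against a nonnegative measure, $|\bu_g|\le \sup\{|\bw|:\, g(t,\cdot,\cdot)\ne 0\}$; a Gronwall-type comparison on $|\bV|$ then keeps the velocity support inside the ball of radius $M_0$ for all time, provided the incoming $g$ already has velocity support in that ball. Hence I would take the fixed point set to be
\[
   \mathcal{K}=\Big\{g\in L^{\infty}([0,T],L^1\cap L^\infty):\ 0\le g,\ \|g(t)\|_{L^p}\le e^{\lambda d(p-1)t/p}\|f_0\|_{L^p},\ \mathrm{supp}_{\bv}\, g(t)\subset \overline{B_{M_0}}\Big\},
\]
which is convex, and is mapped into itself by $\mathcal{T}$. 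For compactness of $\mathcal{T}(\mathcal{K})$ in a topology where $\mathcal{T}$ is continuous (say $C([0,T],L^1_{loc})$ or $L^1((0,T)\times\bbr^{2d})$ weak-strong), I would invoke the velocity averaging lemma: the uniform $L^p$ bounds on $f$ and on $\bv f$ (the latter because of the compact velocity support) together with the transport equation give compactness of the moments $\int f\,d\bv$ and $\int f\bv\,d\bv$, hence of $\rho_r,\bj_r$, hence of $\bu$; this yields strong $L^1$ convergence of a subsequence and the continuity needed for Schauder. Passing to the limit in the regularization (mollifier $\to$ identity, $f_0$ approximation $\to f_0$), using the same averaging-lemma compactness and the velocity-support bound to control the nonlinear term $\bu f$, produces a genuine weak solution of \eqref{eq-cut-kine} satisfying (i)--(iii). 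Finally, the time-continuity $f\in C([0,T],L^1)$ in (i) comes from the renormalization/DiPerna--Lions theory for the (now established) transport equation with the fixed, sufficiently regular field, or directly from an equicontinuity-in-time estimate furnished by the equation.

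The main obstacle I expect is the interplay between the \emph{cut-off} (discontinuous kernel $\chi_r$) and the closedness of the nonlinear map: $\bu_g$ depends on $g$ through $\rho_r$ in the denominator, which can vanish or be small, so one must show (a) that the map $g\mapsto\bu_g$ is stable under strong $L^1$ convergence of $g$ despite the $0/0$ convention — this requires care on the set $\{\rho_r=0\}$, where the contribution to the equation is harmless since $f$ itself is then dominated by the vacuum region — and (b) that the regularization of $\chi_r$ can be removed in the limit without losing the velocity-support bound, which it does not, because the bound $|\bu_g|\le M_0$ holds for \emph{any} nonnegative averaging kernel. A secondary technical point is justifying the characteristic/ODE representation for the regularized linear problem rigorously enough to read off (i)--(iii), but this is standard once the field is Lipschitz. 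Modulo these points, Schauder's theorem applied on $\mathcal{K}$ delivers the fixed point, which is the desired weak solution.
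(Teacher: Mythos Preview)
Your high-level strategy (linear theory $+$ Schauder $+$ velocity averaging $+$ uniform velocity-support bound) matches the paper, and your derivation of (i)--(iii) and of $M(t)\le M_0$ along characteristics is correct. However, two tactical choices differ from the paper in ways that matter.

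First, the paper applies Schauder not on a set of \emph{densities} $g$ but on a set $\mathcal{X}$ of \emph{fields} $E(t,\bx)$ (bounded by $M_0$ and Lipschitz in $\bx$), with the map $\mathcal{F}:E\mapsto \bu^{\delta}_{g_E}$ where $g_E$ solves the linear equation with drift $E$. This is not cosmetic: the transport equation has no smoothing, so a sequence $f_n=\mathcal{T}(g_n)$ is in general \emph{not} relatively compact in any strong topology on densities, which is what your formulation needs. What velocity averaging delivers is compactness of the \emph{moments} $\int f_n\,d\bv$, $\int f_n\bv\,d\bv$, and hence of the fields $\bj_r,\rho_r,\bu^{\delta}$; by iterating on fields, the paper places the fixed point exactly where compactness lives (Lemmas~3.1 and 3.4 give equi-Lipschitz and Arzel\`a--Ascoli--type compactness of $\mathcal{F}(\mathcal{X})$ in $C([0,T]\times\bbr^d)$). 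Your scheme on $\mathcal{K}$ could perhaps be salvaged via a weak topology and Schauder--Tychonoff, but as written the claimed compactness of $\mathcal{T}(\mathcal{K})$ does not follow from velocity averaging.

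Second, your proposed regularization---mollifying $\chi_r$---is both unnecessary and insufficient. It is unnecessary because $\bj_r$ and $\rho_r$, being integrals over balls of bounded functions with bounded velocity support, are already Lipschitz in $\bx$ and in $t$ with the sharp cut-off (this is Lemma~3.1, steps~2--3). It is insufficient because smoothing the kernel does nothing about the vanishing of $\rho_r$, which is what actually destroys the Lipschitz regularity of $\bu_g=\bj_r/\rho_r$ and hence the characteristic ODE. The paper's regularization is instead $\bu^{\delta}=\bj_r/(\delta+\rho_r)$; with $\delta>0$ this field is globally Lipschitz, the linear theory of Section~2 applies, and the Schauder argument of Section~3 produces approximate solutions $f^{\delta}$. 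The passage $\delta\to 0$ (Section~4) then uses exactly the decomposition you anticipate on $\{\rho_r>0\}$ versus $\{\rho_r=0\}$ to identify the limit of $f^{\delta}\bu^{\delta}$ with $f\bu$.
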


After the introduction, the rest of the paper is divided into four parts. In section 2, we prove the well-posedness of weak solution to the linear equation. Based on the results about the linear equation, in section 3 we show that there exists a weak solution to the approximate equation by using Schauder fixed point theorem. In section 4, we recover the weak solution of the original system by taking weak limit to the approximate solutions. Finally, section 5 is devoted to the summary of our paper.

\noindent\textbf{Notation}: Throughout the paper, a superscript $i$ of a vector denotes its $i$-th component, while a subscript denotes its order. $K$ denotes a positive constant. We denote by $C$ a general positive constant depending on $\lambda$,$r$, $M_0$ and $\|f_0\|_{L^{\infty}(\bbr^{2d})}$ that may takes different values in different expressions.
%
%
\section{Well-posedness of Weak Solutions to the Linear Equation}
\setcounter{equation}{0}
In this section, we study the following linear equation
\begin{equation} \label{eq-linear}
  \begin{dcases}
    f_t + \bv \cdot \nabla_{\bx} f+ \lambda \nabla_{\bv} \cdot[(E(t,\bx)-\bv)f]=0 \quad \text{in} \ [0,T]\times \bbr^{2d}, \\
     f|_{t=0}=f_0(\bx,\bv),
  \end{dcases}
\end{equation}
with $E(t, \bx) \in [C([0,T] \times \bbr^{2d})]^d \ (\forall T>0)$ satisfying
\begin{equation} \label{eq-lip}
  |E(t, \bx_2)-E(t, \bx_1)| \le K|\bx_2-\bx_1|, \quad \forall t \in [0,T].
\end{equation}

We denote by $X(t;\bx_0, \bv_0)$, $V(t; \bx_0, \bv_0)$ the characteristic issuing from $(\bx_0, \bv_0)$ initially. Then it satisfies
\begin{equation} \label{eq-charac}
\begin{dcases}
  \frac{d X}{d t}=V, \\
  \frac{d V}{d t}= \lambda (E(t, \bx)-V),
\end{dcases}
\end{equation}
\[
  X|_{t=0}=\bx_0, \qquad V|_{t=0}=\bv_0.
\]
By virtue of the standard theory of ODEs, we know
\[
  (X(t;\cdot, \cdot), V(t; \cdot, \cdot)):~ \bbr^{2d} \longrightarrow \bbr^{2d}
\]
is a bi-Lipschitz continuous homomorphism. Thus we can construct the unique smooth solution by characteristics method if the initial data is smooth. Since $C_{0}^{\infty}(\bbr^{2d})$ is dense in $L^1(\bbr^{2d}) \cap L^{\infty}(\bbr^{2d})$, a simple approximation yields the following theorem.
\begin{theorem}
  Assume $0 \le f_0(\bx, \bv)\in L^1(\bbr^{2d})\cap L^{\infty}(\bbr^{2d})$ and $E(t, \bx) \in [C([0,T] \times \bbr^{2d})]^d \ (\forall T>0)$ satisfies \eqref{eq-lip}. Then the equation \eqref{eq-linear} admits a unique weak solution $f(t, \bx,\bv) \in L^{\infty}([0,T],L^1(\bbr^{2d}))\cap L^{\infty}([0,T] \times \bbr^{2d})$. Besides, $f(t, \bx,\bv)$ satisfies
  \begin{eqnarray*}
    &&(i)~ 0 \le f(t,\bx, \bv) \le \|f_0\|_{L^{\infty}(\bbr^{2d})} e^{\lambda dt} \quad \text{for} \ a.e. \ (t,\bx, \bv) \in [0,T] \times \bbr^{2d} \ and \\ &&~ \quad f(t,\bx, \bv) \in C([0,T],L^1(\bbr^{2d}))\cap L^{\infty}([0,T] \times \bbr^{2d}), \\
    &&(ii)~\|f(t)\|_{L^{p}(\bbr^{2d})} = e^{\frac{\lambda d(p-1)t}{p}}\|f_0\|_{L^{p}(\bbr^{2d})}, \ 1 \le p< \infty, \quad \forall t \in [0,T].
  \end{eqnarray*}
\end{theorem}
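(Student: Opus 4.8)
The plan is to solve \eqref{eq-linear} classically for smooth initial data by the method of characteristics, read off all the quantitative properties from the resulting representation formula and Liouville's formula for the flow, and then pass to general data by density together with the exact $L^1$-stability of the linear flow; uniqueness will be obtained by a duality argument.

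First I would rewrite \eqref{eq-linear}, using $\nabla_{\bv}\cdot(E-\bv)=-d$, in the non-conservative form
\[
  f_t + \bv\cdot\nabla_{\bx} f + \lambda(E-\bv)\cdot\nabla_{\bv} f = \lambda d\, f .
\]
Writing $\Phi_t:=(X(t;\cdot,\cdot),V(t;\cdot,\cdot))$ for the flow of \eqref{eq-charac}, which by the ODE theory already invoked is a bi-Lipschitz homeomorphism of $\bbr^{2d}$, the phase-space field $(\bv,\lambda(E-\bv))$ has divergence $-\lambda d$, so Liouville's formula gives $\det D\Phi_t\equiv e^{-\lambda d t}$. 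For $f_0\in C_0^{\infty}(\bbr^{2d})$ the characteristic ODE shows that $f(t,\Phi_t(\bx_0,\bv_0))=e^{\lambda d t}f_0(\bx_0,\bv_0)$ defines a classical solution; positivity and $\|f(t)\|_{L^{\infty}}=e^{\lambda d t}\|f_0\|_{L^{\infty}}$ are immediate, and the change of variables $(\bx,\bv)=\Phi_t(\bx_0,\bv_0)$ with $\det D\Phi_t=e^{-\lambda d t}$ yields $\|f(t)\|_{L^p}^p=e^{\lambda d(p-1)t}\|f_0\|_{L^p}^p$, i.e.\ (ii). Multiplying the equation by a test function and integrating by parts shows $f$ is also a weak solution in the sense of Definition 1.1.

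For general $0\le f_0\in L^1\cap L^{\infty}$, I would choose $f_0^n\in C_0^{\infty}$ with $0\le f_0^n$, $\|f_0^n\|_{L^{\infty}}\le\|f_0\|_{L^{\infty}}$ and $f_0^n\to f_0$ in $L^1$ (mollification and truncation). By linearity $f^n-f^m$ is the classical solution with data $f_0^n-f_0^m$, so the representation formula and $\det D\Phi_t=e^{-\lambda d t}$ give $\|(f^n-f^m)(t)\|_{L^1}=\|f_0^n-f_0^m\|_{L^1}$ for every $t$; hence $(f^n)$ is Cauchy in $C([0,T],L^1(\bbr^{2d}))$ and converges to a limit $f$, which therefore lies in $C([0,T],L^1)$, is nonnegative, and, from $\|f^n(t)\|_{L^{\infty}}=e^{\lambda d t}\|f_0^n\|_{L^{\infty}}\le e^{\lambda d t}\|f_0\|_{L^{\infty}}$, satisfies $0\le f\le\|f_0\|_{L^{\infty}}e^{\lambda d t}$ a.e.\ — this is (i). Since $L^1$-convergence plus a uniform $L^{\infty}$ bound forces $L^p$-convergence for each finite $p$, both sides of the identity (ii) pass to the limit, and so do all terms of the weak formulation — there the test function has compact support, so $\bv$ and $E$ are bounded on it and only $L^1_{\mathrm{loc}}$-convergence of $f^n$ is needed — yielding in particular $f|_{t=0}=f_0$.

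The remaining and most delicate point is uniqueness. If $h$ is the difference of two weak solutions with the same data, then $h$ is a weak solution with $h|_{t=0}=0$, and for $\varphi\in C_0^{\infty}(\bbr^{2d})$ and $\tau\in(0,T)$ I would test against $\psi_{\e}(t,\bx,\bv):=\varphi(\Psi^{\e}_{t,\tau}(\bx,\bv))$, where $\Psi^{\e}$ is the flow of the characteristic field with $E$ replaced by a spatial mollification $E_{\e}$. Since $E_{\e}$ is smooth in $\bx$ and continuous in $t$, $\psi_{\e}$ is $C^1$, has support contained in a fixed compact set $K$ uniformly in $\e$, and solves the transport equation with coefficient $E_{\e}$ exactly; inserting it into the weak formulation for $h$ leaves only the error $\lambda\int_0^{\tau}\!\!\int h\,(E-E_{\e})\cdot\nabla_{\bv}\psi_{\e}$, which tends to $0$ because $\|E-E_{\e}\|_{L^{\infty}(K)}\to0$, $\nabla_{\bv}\psi_{\e}$ is bounded, and $h\in L^{\infty}([0,T],L^1)$. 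Hence $\int h(\tau)\varphi=0$ for all $\varphi$ and all $\tau$, so $h\equiv0$. (Alternatively, the field $(\bv,\lambda(E-\bv))$ is globally Lipschitz in $(\bx,\bv)$ with bounded divergence, so uniqueness also follows from DiPerna--Lions theory.) I expect the uniform-in-$\e$ control of $\operatorname{supp}\psi_{\e}$, i.e.\ the propagation-of-support estimate for the backward flow, to be the main technical obstacle; the rest is routine given the bi-Lipschitz flow.
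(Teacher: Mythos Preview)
Your proposal is correct and follows essentially the same route as the paper: approximate $f_0$ by smooth compactly supported data, solve along the characteristics \eqref{eq-charac} using the explicit formula $f^{\varepsilon}(t,\Phi_t(\bx_0,\bv_0))=e^{\lambda d t}f_0^{\varepsilon}(\bx_0,\bv_0)$ together with the Jacobian $\det D\Phi_t=e^{-\lambda d t}$, deduce exact $L^1$-stability, and pass to the limit. The only differences are cosmetic or supplementary: you obtain (ii) by the change of variables $(\bx,\bv)=\Phi_t(\bx_0,\bv_0)$, whereas the paper multiplies the equation by $p(f^{\varepsilon})^{p-1}$ and integrates; and for uniqueness you give a full duality argument with a mollified backward flow (or invoke DiPerna--Lions), whereas the paper merely records that the $L^p$ identity ``amounts to the uniqueness of the weak solutions'' without spelling this out. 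Your uniqueness argument is the more careful of the two, and since here the transport field $(\bv,\lambda(E-\bv))$ is globally Lipschitz in $(\bx,\bv)$ with constant divergence, the propagation-of-support estimate you flag as the main obstacle is in fact straightforward.
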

\begin{proof}
  Since $C_{0}^{\infty}(\bbr^{2d})$ is dense in $L^1(\bbr^{2d}) \cap L^{\infty}(\bbr^{2d})$, we can take a sequence $f_0^{\varepsilon} \in C_{0}^{\infty}$ such that
  \[
    \|f_0^{\varepsilon}-f_0\|_{L^{1}(\bbr^{2d})} \to 0 \ \text{as} \ \varepsilon \to 0 \quad \text{and} \quad \|f_0^{\varepsilon}\|_{L^{\infty}(\bbr^{2d})} \le \|f_0\|_{L^{\infty}(\bbr^{2d})}.
  \]
  Using the method of characteristics, we know
  \begin{equation} \label{eq-linear-appro}
  \begin{dcases}
    f_t^{\varepsilon} + \bv \cdot \nabla_{\bx} f^{\varepsilon}+ \lambda \nabla_{\bv} \cdot[(E(t,\bx)-\bv)f^{\varepsilon}]=0 \quad \text{in} \ [0,T]\times \bbr^{2d}, \\
     f^{\varepsilon}|_{t=0}=f_0^{\varepsilon}(\bx,\bv),
  \end{dcases}
\end{equation}
admits a unique smooth solution
\begin{equation} \label{eq-expreson}
  f^{\varepsilon}(t,X(t;\bx_0, \bv_0),V(t;\bx_0, \bv_0))=f_0^{\varepsilon}(\bx_0,\bv_0) e^{\lambda d t} \quad \forall t \in [0,T].
\end{equation}
Integrating \eqref{eq-linear-appro}-1 in $[0,t]\times \bbr^{2d}$, we have
\begin{equation} \label{eq-mass-cons}
  \int_{\bbr^{2d}} f^{\varepsilon}(t,\bx,\bv) d \bx d \bv = \int_{\bbr^{2d}} f_0^{\varepsilon}(\bx,\bv) d \bx d \bv.
\end{equation}
Write $f_0^{\varepsilon_1}-f_0^{\varepsilon_2}$ in the form of
\[
f_0^{\varepsilon_1}-f_0^{\varepsilon_2}=(f_0^{\varepsilon_1}-f_0^{\varepsilon_2})^{+}+(f_0^{\varepsilon_1}-f_0^
 {\varepsilon_2})^{-}.
\]
By virtue of the uniqueness of the solution, we obtain
\begin{align*}
&\int_{\bbr^{2d}} |f^{\varepsilon_1}(t,\bx,\bv)-f^{\varepsilon_2}(t,\bx,\bv)| d \bx d \bv \\
        =&\int_{\bbr^{2d}} \Big[(f^{\varepsilon_1}(t,\bx,\bv)-f^{\varepsilon_2}(t,\bx,\bv))^{+}-(f^{\varepsilon_1}(t,\bx,\bv)-f^
        {\varepsilon_2}(t,\bx,\bv))^{-}\Big] d \bx d \bv \\
        =&\int_{\bbr^{2d}} \Big[(f_0^{\varepsilon_1}(\bx,\bv)-f_0^{\varepsilon_2}(\bx,\bv))^{+}-(f_0^{\varepsilon_1}(\bx,\bv)-f_0^
        {\varepsilon_2}(\bx,\bv))^{-}\Big] d \bx d \bv \\
        =&\int_{\bbr^{2d}} |f_0^{\varepsilon_1}(\bx,\bv)-f_0^{\varepsilon_2}(\bx,\bv)| d \bx d \bv.
\end{align*}
Thus there exists a subsequence still denoted by $f^{\varepsilon_i}(t,\bx,\bv)$ such that
\begin{equation} \label{eq-converg}
 f^{\varepsilon_i}(t,\bx,\bv) \to f(t,\bx,\bv) \ \text{for} \ a.e. \ (t,\bx,\bv)\in [0,T]\times \bbr^{2d}, \quad \text{as} \ \varepsilon_i \to 0.
\end{equation}
From \eqref{eq-mass-cons}, we have
\[
\int_{\bbr^{2d}} \Big[f^{\varepsilon}(t_2,\bx,\bv)-f^{\varepsilon}(t_1,\bx,\bv)\Big] d \bx d \bv=0, \quad \forall t_1, t_2 \in [0,T].
\]
Letting $\varepsilon \to 0$, we get
\[
f_t + \bv \cdot \nabla_{\bx} f+ \lambda \nabla_{\bv} \cdot[(E(t,\bx)-\bv)f]=0 \quad \text{in} \ \mathcal{D}'((0,T)\times \bbr^{2d}),
\]
\[
0 \le f(t,\bx, \bv) \le \|f_0\|_{L^{\infty}(\bbr^{2d})} e^{\lambda dt} \quad \text{for} \ a.e. \ (t,\bx, \bv) \in [0,T] \times \bbr^{2d}
\]
and
\[
\int_{\bbr^{2d}} \Big[f(t_2,\bx,\bv)-f(t_1,\bx,\bv)\Big] d \bx d \bv=0, \quad \forall t_1, t_2 \in [0,T].
\]
Therefore, $f(t,\bx,\bv)$ is a weak solution and $f(t,\bx, \bv) \in C([0,T],L^1(\bbr^{2d}))\cap L^{\infty}([0,T] \times \bbr^{2d})$.

Multiplying \eqref{eq-linear-appro}-1 by $p (f^{\varepsilon})^{p-1}$ $(1< p< \infty)$ and integrating in $\bbr^{2d}$, we get
\[
\frac{d}{dt}\int_{\bbr^{2d}}| f^{\varepsilon}(t,\bx,\bv)|^p d \bx d \bv=\lambda d(p-1)\int_{\bbr^{2d}}| f^{\varepsilon}(t,\bx,\bv)|^p d \bx d \bv.
\]
Solving the above ODE yields
\begin{equation} \label{eq-p-norm-appro}
\|f^{\varepsilon}(t)\|_{L^{p}(\bbr^{2d})} = e^{\frac{\lambda d(p-1)t}{p}}\|f_0^{\varepsilon}\|_{L^{p}(\bbr^{2d})}, \ 1 < p< \infty, \quad \forall t \in [0,T].
\end{equation}
Combining \eqref{eq-mass-cons}, \eqref{eq-converg} and \eqref{eq-p-norm-appro}, we obtain
\begin{equation} \label{eq-p-norm}
\|f(t)\|_{L^{p}(\bbr^{2d})} = e^{\frac{\lambda d(p-1)t}{p}}\|f_0\|_{L^{p}(\bbr^{2d})}, \ 1 \le p< \infty, \quad \forall t \in [0,T]
\end{equation}
by letting $\varepsilon \to 0$, which amounts to the uniqueness of the weak solutions.
\end{proof}

The following lemma implies that $f$ is a measure preserving map along the characteristics. It plays an important role in our subsequent proof.
\begin{lemma}
Assume $f(t,\bx,\bv)$ is a weak solution of \eqref{eq-linear} and $\varphi(\bx, \bv) \in L_{loc}^{1}(\bbr^{2d})$. Then it holds that
\[
\int_{\Omega} f(t,\bx,\bv) \varphi(\bx, \bv) d \bx d \bv =\int_{\Omega_0} f_0(\bx_0,\bv_0) \varphi(X(t;\bx_0, \bv_0), V(t; \bx_0, \bv_0)) d \bx_0 d \bv_0
\]
for any $\Omega \in \bbr^{2d}$.
\end{lemma}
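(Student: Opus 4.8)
The plan is to establish the identity first for the smooth approximating solutions $f^{\varepsilon}$ built in the proof of Theorem 2.1 — where it reduces to a genuine change of variables along the classical characteristic flow — and then to let $\varepsilon\to0$. It suffices to treat $\Omega$ bounded; writing $\Phi_t:=(X(t;\cdot,\cdot),V(t;\cdot,\cdot))$ for the characteristic flow and $\Omega_0:=\Phi_t^{-1}(\Omega)$, the bi-Lipschitz homeomorphism property of $\Phi_t$ guarantees that $\Omega_0$ is again bounded and measurable.

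For the smooth solutions, I would first record that the phase-space vector field driving \eqref{eq-charac}, namely $(\bv,\lambda(E(t,\bx)-\bv))$, has the constant divergence $\nabla_{\bx}\cdot\bv+\nabla_{\bv}\cdot[\lambda(E(t,\bx)-\bv)]=-\lambda d$, so that by Liouville's formula the Jacobian $J(t):=\det D\Phi_t$ solves $\dot J=-\lambda d\,J$, $J(0)=1$, hence $J(t)\equiv e^{-\lambda d t}$. Substituting $(\bx,\bv)=\Phi_t(\bx_0,\bv_0)$ in $\int_{\Omega}f^{\varepsilon}(t,\bx,\bv)\varphi(\bx,\bv)\,d\bx\,d\bv$ and using the explicit representation \eqref{eq-expreson}, the growth factor $e^{\lambda d t}$ carried by $f^{\varepsilon}$ along characteristics cancels exactly against $|J(t)|=e^{-\lambda d t}$, giving
\begin{equation}\label{eq-lem-cv}
\int_{\Omega}f^{\varepsilon}(t,\bx,\bv)\varphi(\bx,\bv)\,d\bx\,d\bv=\int_{\Omega_0}f_0^{\varepsilon}(\bx_0,\bv_0)\,\varphi\big(X(t;\bx_0,\bv_0),V(t;\bx_0,\bv_0)\big)\,d\bx_0\,d\bv_0 .
\end{equation}
The same substitution applied to $|\varphi|$ also shows that the pullback $\varphi\circ\Phi_t$ lies in $L^1(\Omega_0)$ with $\|\varphi\circ\Phi_t\|_{L^1(\Omega_0)}=e^{\lambda d t}\|\varphi\|_{L^1(\Omega)}$.

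To pass to the limit in \eqref{eq-lem-cv} I would use that, by the $L^1$-stability estimate established in the proof of Theorem 2.1, $f^{\varepsilon}(t)\to f(t)$ in $L^1(\bbr^{2d})$ for every $t\in[0,T]$ and $f_0^{\varepsilon}\to f_0$ in $L^1(\bbr^{2d})$, together with the uniform bounds $0\le f^{\varepsilon}(t)\le\|f_0\|_{L^{\infty}}e^{\lambda d t}$ and $\|f_0^{\varepsilon}\|_{L^{\infty}}\le\|f_0\|_{L^{\infty}}$. Splitting $\varphi=\varphi\,\mathbf{1}_{\{|\varphi|\le N\}}+\varphi\,\mathbf{1}_{\{|\varphi|>N\}}$, the truncated part of each side converges (an $L^1$-function tested against the bounded function $\varphi\,\mathbf{1}_{\{|\varphi|\le N\}}$, respectively its pullback), while the remaining part is bounded uniformly in $\varepsilon$ by $\|f_0\|_{L^{\infty}}e^{\lambda d t}\int_{\Omega}|\varphi|\,\mathbf{1}_{\{|\varphi|>N\}}$, respectively $\|f_0\|_{L^{\infty}}\int_{\Omega_0}|\varphi\circ\Phi_t|\,\mathbf{1}_{\{|\varphi\circ\Phi_t|>N\}}$, both of which vanish as $N\to\infty$ since $\varphi\in L^1(\Omega)$ and $\varphi\circ\Phi_t\in L^1(\Omega_0)$. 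Letting $\varepsilon\to0$ and then $N\to\infty$ in \eqref{eq-lem-cv} yields the claimed identity.

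The only genuine obstacle is that $\varphi$ is merely locally integrable rather than bounded, so neither side of \eqref{eq-lem-cv} can be passed to the limit by a plain $L^1$–$L^{\infty}$ pairing; the level-set truncation above, leaning on the uniform $L^{\infty}$ bounds on $f^{\varepsilon}$ and $f_0^{\varepsilon}$ and on the $L^1(\Omega_0)$-membership of $\varphi\circ\Phi_t$ (itself a by-product of the change of variables with constant Jacobian $e^{-\lambda d t}$), is the device that resolves it.
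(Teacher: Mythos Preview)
Your proof is correct and follows essentially the same route as the paper: reduce to the smooth approximations $f^{\varepsilon}$, perform the change of variables along the characteristic flow, compute the Jacobian $J(t)=e^{-\lambda d t}$ (you invoke Liouville's formula via the constant divergence $-\lambda d$, while the paper differentiates the determinant directly---these are equivalent), and observe the cancellation against the growth factor in \eqref{eq-expreson}. The paper simply asserts that it suffices to prove the identity at the level of $f^{\varepsilon}$ and stops there; your additional level-set truncation argument to pass to the limit $\varepsilon\to0$ when $\varphi$ is merely $L^1_{\mathrm{loc}}$ is a detail the paper omits, and your treatment of it is sound.
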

\begin{proof}
We only need to prove
\[
\int_{\Omega} f^{\varepsilon}(t,\bx,\bv) \varphi(\bx, \bv) d \bx d \bv =\int_{\Omega_0} f_0^{\varepsilon}(\bx_0,\bv_0) \varphi(X(t;\bx_0, \bv_0), V(t; \bx_0, \bv_0)) d \bx_0 d \bv_0.
\]
By virtue of our previous analysis on the characteristics, we know
\[
(X(t;\cdot, \cdot), V(t; \cdot, \cdot)):\ \Omega_0 \longrightarrow \Omega
\]
is a bi-Lipschitz continuous homomorphism. Make the following coordinate transform
\[
\bx=X(t;\bx_0, \bv_0) \quad \bv=V(t; \bx_0, \bv_0).
\]
Then the Jacobian of the transform is defined by
\[
J(t,\bx_0,\bv_0)=\begin{vmatrix}
                   \frac{\partial X}{\partial \bx_0}& \frac{\partial X}{\partial \bv_0} \\
                   \frac{\partial V}{\partial \bx_0}& \frac{\partial V}{\partial \bv_0}
                \end{vmatrix}.
\]
Since
\begin{align*}
&\left|\Big(X_1(t;\bx_{10},\bv_{10}),V_1(t;\bx_{10},\bv_{10})\Big)-\Big (X_2(t;\bx_{20},\bv_{20}),V_2(t;\bx_{20},\bv_{20})\Big)\right| \\ &\le e^{KT}\left|(\bx_{10},\bv_{10})-(\bx_{20},\bv_{20})\right| \quad \forall t \in [0,T],
\end{align*}
we know $\frac{\partial X}{\partial \bx_0}$,$\frac{\partial X}{\partial \bv_0}$,$\frac{\partial V}{\partial \bx_0}$ and $\frac{\partial V}{\partial \bv_0}$ exist for a.e. $(\bx_0, \bv_0) \in \bbr^{2d}$. As we compute Lebesgue integral, we can suppose that $J(t,\bx_0,\bv_0)$ exists for all $(\bx_0, \bv_0) \in \bbr^{2d}$.
Then
\begin{equation} \label{eq-cood-tranform}
\begin{aligned}
&\int_{\Omega} f^{\varepsilon}(t,\bx,\bv) \varphi(\bx, \bv) d \bx d \bv \\&=\int_{\Omega_0} f^{\varepsilon}(t,X(t;\bx_0, \bv_0), V(t; \bx_0, \bv_0)) \varphi(X(t;\bx_0, \bv_0), V(t; \bx_0, \bv_0))J(t,\bx_0,\bv_0) d \bx_0 d \bv_0.
\end{aligned}
\end{equation}
Next we compute $J(t,\bx_0,\bv_0)$. Fix $(\bx_0, \bv_0) \in \bbr^{2d}$. We differentiate $J$ with respect to $t$ and then obtain
\begin{align*}
\frac{dJ}{dt}&=\sum_{i=1}^{d}
   \begin{vmatrix}
     \vdots & \vdots\\
     \frac{\partial}{\partial \bx_0}\frac{dX^i}{dt} & \frac{\partial}{\partial \bv_0}\frac{dX^i}{dt} \\
     \vdots & \vdots\\
     \frac{\partial V}{\partial \bx_0}& \frac{\partial V}{\partial \bv_0}
   \end{vmatrix}
   +\sum_{i=1}^{d}
   \begin{vmatrix}
    \frac{\partial X}{\partial \bx_0}& \frac{\partial X}{\partial \bv_0} \\
     \vdots & \vdots\\
     \frac{\partial}{\partial \bx_0}\frac{dV^i}{dt} & \frac{\partial}{\partial \bv_0}\frac{dV^i}{dt} \\
     \vdots & \vdots
   \end{vmatrix}\\
   &=-\lambda d J,
\end{align*}
where we used
\[
  \frac{dX^i}{dt}=V^i,  \quad \frac{dV^i}{dt}=\lambda (E^i(t,X)-V^i)
\]
and
\[
  \frac{\partial E^i}{\partial \bx_0}=\frac{\partial E^i}{\partial X}\frac{\partial X}{\partial \bx_0}, \quad
  \frac{\partial E^i}{\partial \bv_0}=\frac{\partial E^i}{\partial X}\frac{\partial X}{\partial \bv_0}.
\]
Thus $J(t,\bx_0,\bv_0)= e^{-\lambda d t}$ since $J_0=1$.
Substituting \eqref{eq-expreson} into \eqref{eq-cood-tranform}, we conclude our proof.
\end{proof}
%
%
\section{Construction of the Approximate Solutions}
\setcounter{equation}{0}
This section is devoted to construction of the approximate solutions for \eqref{eq-cut-kine}. Notice that the nonlinear term in \eqref{eq-cut-kine} is $\bu(t,\bx)$. The difficulty mainly comes from the fact that $\rho_r(t,\bx)$ may be equal to $0$, so we approximate $\bu(t,\bx)$ with $\bu^{\delta}(t,\bx)=\frac{\bj_r^{\delta}(t,\bx)}{\delta+\rho_r^{\delta}(t,\bx)}$. $\bj_r^{\delta}(t,\bx)$ and $\rho_r^{\delta}(t,\bx)$ are defined in the same way as before, where $f^{\delta}(t,\bx, \bv)$ is the weak solution of the following approximate equation:
\begin{equation} \label{eq-appro-cut-kine}
     \begin{dcases}
         f_t^{\delta} + \bv \cdot \nabla_{\bx} f^{\delta}+ \lambda \nabla_{\bv} \cdot[(\bu^{\delta}(t,\bx)-\bv)f^{\delta}]=0,\\
         f^{\delta}|_{t=0}=f_0(\bx,\bv) \in L^1(\bbr^{2d})\cap L^{\infty}(\bbr^{2d}).
     \end{dcases}
\end{equation}

We use the Schauder fixed point theorem to establish the existence of approximate solutions. Take
\begin{multline}
  \mathcal{X}:= \Big\{E(t,\bx):\ E(t,\bx) \in C([0,T]\times \bbr^{2d}),\ \|E(t,\bx)\|_{L^{\infty}([0,T] \times \bbr^{2d})}
   \le M_0 \ \text{and} \ \\E(t,\cdot) \ \text{is Lipschitz continuous uniformly for} \ t \in [0,T] \Big\},
\end{multline}
where $M_0$ is the bound of the velocity support of $f_0$. For any $E(t,\bx) \in \mathcal{X}$, we know there is a unique weak solution to \eqref{eq-linear} according to Theorem 2.1. We denote it by  $g(t,\bx,\bv)$ and define
\[
  \mathcal{F}[E](t,\bx)=\frac{\int_{|\bx-\by|<r}\int_{\bbr^d}g(t,\by,\bw)\bw d\bw d\by}{\delta+\int_{|\bx-\by|<r}\int_{\bbr^d}g(t,\by,\bw) d\bw d\by}.
\]
In the following, we suppose the weak solution $g(t,\bx,\bv) \in C_0^{\infty}([0,T]\times \bbr^{2d})$. If not, we approximate $f_0$ with $f_0^{\e}$ and use the smooth solution $g^{\e}(t,\bx,\bv)$ to substitute $g(t,\bx,\bv)$.

We will show that $\mf$ satisfies the frame of Schauder fixed point theorem and yields the following theorem.
We denote the approximate solution by $f^{\delta}(t,\bx, \bv)$, while $M^{\delta}(t)$ denotes the bound of its velocity support at time $t$.
\begin{theorem}
 Assume $0 \le f_0(\bx, \bv)\in L^1(\bbr^{2d})\cap L^{\infty}(\bbr^{2d})$ and $M_0$ is bounded. Then \eqref{eq-appro-cut-kine} admits a weak solution $f^{\delta}(t, \bx,\bv) \in L^{\infty}([0,T],L^1(\bbr^{2d}))\cap L^{\infty}([0,T] \times \bbr^{2d})$, $\forall T>0$. Besides, $f^{\delta}(t,\bx,\bv)$ and $M^{\delta}(t)$ satisfy
  \begin{eqnarray*}
    &&(i)~ 0 \le f^{\delta}(t,\bx, \bv) \le \|f_0\|_{L^{\infty}(\bbr^{2d})} e^{\lambda dt} \quad \text{for} \ a.e. \ (t,\bx, \bv) \in [0,T] \times \bbr^{2d} \ and \\ &&~ \quad f^{\delta}(t,\bx, \bv) \in C([0,T],L^1(\bbr^{2d}))\cap L^{\infty}([0,T] \times \bbr^{2d}), \\
    &&(ii)~M^{\delta}(t) \le M_0,\\
    &&(iii)~\|f^{\delta}(t)\|_{L^{p}(\bbr^{2d})} = e^{\frac{\lambda d(p-1)t}{p}}\|f_0\|_{L^{p}(\bbr^{2d})}, \ 1 \le p< \infty, \quad \forall t \in [0,T].
  \end{eqnarray*}
\end{theorem}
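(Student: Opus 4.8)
The plan is to apply the Schauder fixed point theorem to the map $\mf$: given $E\in\mx$, let $g=g_E$ be the weak solution of \eqref{eq-linear} supplied by Theorem 2.1, write $\rho_r^{\delta}[g_E]$, $\bj_r^{\delta}[g_E]$ for the associated local density and momentum, and set $\mf[E]=\bj_r^{\delta}[g_E]/(\delta+\rho_r^{\delta}[g_E])$; a fixed point $\bu^{\delta}=\mf[\bu^{\delta}]$ then produces the weak solution $f^{\delta}:=g_{\bu^{\delta}}$ of \eqref{eq-appro-cut-kine}, the fixed point identity being precisely $\bu^{\delta}=\bj_r^{\delta}[f^{\delta}]/(\delta+\rho_r^{\delta}[f^{\delta}])$. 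Since the Lipschitz constants of the members of $\mx$ are not uniformly bounded, I would run the argument on the subset $\mx_{\delta}\subset\mx$ of those $E$ that are moreover Lipschitz in $\bx$ with one fixed constant $L_{\delta}$ (found below); this $\mx_{\delta}$ is convex and closed in $C([0,T]\times\bbr^{d})$, and the whole point is that $\mf(\mx)\subset\mx_{\delta}$ with $\mf(\mx)$ relatively compact. As in the excerpt, wherever smoothness is needed (the divergence identities, the change of variables in Lemma 2.1) one runs the computation on the smooth solutions $g_E^{\e}$ built from $f_0^{\e}\to f_0$ in $L^1\cap L^{\infty}$ and passes to the limit.

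First I would check $\mf(\mx)\subset\mx_{\delta}$. Fix $E\in\mx$, so $|E|\le M_0$; along \eqref{eq-charac}, $\tfrac{d}{dt}|V|^{2}=2\lambda V\cdot(E-V)\le 2\lambda|V|(M_0-|V|)\le 0$ once $|V|\ge M_0$, so the flow of \eqref{eq-charac} carries $\{|\bv_0|\le M_0\}$ into $\{|\bv|\le M_0\}$; as $f_0=0$ a.e.\ on $\{|\bv_0|>M_0\}$, Lemma 2.1 forces $g_E(t,\cdot,\bv)=0$ for a.e.\ $|\bv|>M_0$. Hence $|\bj_r^{\delta}[g_E]|\le M_0\,\rho_r^{\delta}[g_E]\le M_0(\delta+\rho_r^{\delta}[g_E])$, so $\|\mf[E]\|_{L^{\infty}}\le M_0$. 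Moreover $\|g_E(t)\|_{L^{\infty}}\le\|f_0\|_{L^{\infty}}e^{\lambda dt}$ by Theorem 2.1, and this together with $|B(\bx_2,r)\triangle B(\bx_1,r)|\le Cr^{d-1}|\bx_2-\bx_1|$ and the velocity-support bound shows $\rho_r^{\delta}[g_E]$ and $\bj_r^{\delta}[g_E]$ are Lipschitz in $\bx$ uniformly in $t\in[0,T]$ and $E\in\mx$; since the denominator exceeds $\delta$ and $|\bj_r^{\delta}[g_E]|\le M_0\rho_r^{\delta}[g_E]$, the quotient $\mf[E]$ is Lipschitz in $\bx$ with a constant $L_{\delta}$ depending only on $\delta,M_0,r,\lambda,d,T,\|f_0\|_{L^{\infty}}$, so indeed $\mf(\mx)\subset\mx_{\delta}$.

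Next, for the compactness and continuity. Using Lemma 2.1, $\rho_r^{\delta}[g_E](t,\bx)=\int_{\{X(t;\bx_0,\bv_0)\in B(\bx,r)\}}f_0\,d\bx_0 d\bv_0$, and from $|X(t;\bx_0,\bv_0)-X(s;\bx_0,\bv_0)|\le\int_s^t|V|\,d\tau\le M_0|t-s|$ the two domains at times $t,s$ differ by a set whose $X(t;\cdot)$-image lies in a spherical annulus of width $2M_0|t-s|$; a change of variables back together with the $L^{\infty}$ and velocity-support bounds then gives $|\rho_r^{\delta}[g_E](t,\bx)-\rho_r^{\delta}[g_E](s,\bx)|\le C|t-s|$ uniformly in $E,\bx$, and likewise for $\bj_r^{\delta}[g_E]$, so $\mf(\mx)$ is equicontinuous in $t$. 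Also, since $|X(t;\bx_0,\bv_0)-\bx_0|\le M_0 t$ on the velocity support of $f_0$, Lemma 2.1 gives $\rho_r^{\delta}[g_E](t,\bx)\le\int_{|\by-\bx|\le r+M_0T}\int_{\bbr^{d}}f_0(\by,\bw)\,d\bw d\by$, and as $|\bj_r^{\delta}[g_E]|\le M_0\rho_r^{\delta}[g_E]$ and the denominator is $\ge\delta$ this forces $\mf[E](t,\bx)\to 0$ as $|\bx|\to\infty$ uniformly in $E$ and $t$, because $f_0\in L^1(\bbr^{2d})$. Being bounded, equicontinuous and uniformly decaying at spatial infinity, $\mf(\mx)$ is relatively compact in $C([0,T]\times\bbr^{d})$ by the Arzela--Ascoli theorem. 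Continuity of $\mf$ on $\mx_{\delta}$ holds because $E_n\to E$ uniformly makes the flows of \eqref{eq-charac} converge uniformly on compacta (Gronwall), hence $g_{E_n}\to g_E$ in the push-forward sense, hence $\rho_r^{\delta}[g_{E_n}]\to\rho_r^{\delta}[g_E]$ and $\bj_r^{\delta}[g_{E_n}]\to\bj_r^{\delta}[g_E]$ pointwise by dominated convergence and then uniformly by the equicontinuity above, so $\mf[E_n]\to\mf[E]$. Schauder's theorem now produces $\bu^{\delta}\in\mx_{\delta}$ with $\bu^{\delta}=\mf[\bu^{\delta}]$, and $f^{\delta}:=g_{\bu^{\delta}}$ is a weak solution of \eqref{eq-appro-cut-kine}. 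As $\bu^{\delta}$ is Lipschitz in $\bx$ it satisfies \eqref{eq-lip}, so (i) and (iii) are Theorem 2.1 applied with $E=\bu^{\delta}$; and (ii) follows from the maximum principle for $|V|^{2}$ together with Lemma 2.1 exactly as above, giving $f^{\delta}(t,\cdot,\bv)=0$ for a.e.\ $|\bv|>M_0$, i.e.\ $M^{\delta}(t)\le M_0$.

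I expect the main obstacle to be the compactness: on the \emph{unbounded} spatial domain $[0,T]\times\bbr^{d}$ the equi-Lipschitz bounds alone do not give relative compactness, and one must also produce the uniform decay of $\mf[E](t,\bx)$ as $|\bx|\to\infty$, which is exactly where the hypothesis $f_0\in L^1(\bbr^{2d})$ and the linear-in-time displacement estimate $|X(t;\bx_0,\bv_0)-\bx_0|\le M_0t$ — itself a consequence of the velocity-support bound and Lemma 2.1 — enter in an essential way. A secondary point is making the continuity of $\mf$ rigorous, namely the continuous dependence of the weak solution $g_E$ on $E$ uniformly in $(t,\bx)$; both are handled through the smooth-approximation device already used for Theorem 2.1 and Lemma 2.1.
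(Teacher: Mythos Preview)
Your argument is correct and follows the same Schauder fixed-point scheme as the paper, but the compactness step is handled by a genuinely different and more elementary route. The paper proves relative compactness of $\mf(\mx)$ via the DiPerna--Lions velocity averaging lemma: it shows that $\int g^n(t,\bx,\bv)\,\bv\,d\bv$ is bounded in $H^{1/4}_{t,\bx}$, uses the compact embedding $H^{1/4}\hookrightarrow\hookrightarrow L^1$ on bounded sets, and controls the spatial tail via Lemma~2.1; only afterwards does it upgrade to uniform convergence using the already-established time-Lipschitz bound on $\bj_r^n$ and $\rho_r^n$. You instead obtain compactness directly from Arzel\`a--Ascoli: the uniform Lipschitz bound in $\bx$ comes from the symmetric-difference estimate on balls, the uniform Lipschitz bound in $t$ comes from pulling back along characteristics via Lemma~2.1 together with $|X(t)-X(s)|\le M_0|t-s|$ on the velocity support, and the crucial uniform decay as $|\bx|\to\infty$ comes from $\rho_r^{\delta}[g_E](t,\bx)\le\int_{|\by-\bx|\le r+M_0T}\int f_0$ and $f_0\in L^1$. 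Your approach is more self-contained and makes transparent exactly where the bounded-velocity-support hypothesis enters, whereas the paper's use of velocity averaging is the standard kinetic toolbox and would survive in settings where the characteristic representation is less tractable. Your care in replacing $\mx$ by $\mx_{\delta}$ with a single fixed Lipschitz constant is also a cleaner way to set up Schauder than the paper's somewhat informal claim that $\mf\mx$ is ``convex and compact''; your version gives a closed convex set mapped into itself with relatively compact image, which is the standard hypothesis.
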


In order to prove the above theorem, we need the following lemmas.
\begin{lemma}
Assume $E(t,\bx) \in \mx$. Then $\mf [E](t,\bx) \in \mx$.
\end{lemma}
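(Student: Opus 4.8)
To prove that $\mathcal{F}[E] \in \mathcal{X}$, I must verify the three defining properties of $\mathcal{X}$ for the map $\bx \mapsto \mathcal{F}[E](t,\bx) = \bj_r^\delta/(\delta + \rho_r^\delta)$, where the numerator and denominator are built from the weak solution $g$ of the linear equation \eqref{eq-linear} with field $E$. Recall $g \in C_0^\infty$ by the reduction already in place (otherwise pass to $g^\e$), so $\bj_r^\delta$ and $\rho_r^\delta$ are smooth; the real content is quantitative control, uniform in $t \in [0,T]$, of the $L^\infty$ bound and the Lipschitz constant.

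The first step is the $L^\infty$ bound $\|\mathcal{F}[E]\|_{L^\infty([0,T]\times\bbr^{2d})} \le M_0$. Here the key is Lemma 2.2, the measure-preservation identity along characteristics, applied with the indicator-type weight $\varphi = \mathbf{1}_{\{|\bx-\by|<r\}}$ against the $\by$-integration. Concretely, since $E \in \mathcal{X}$ has $\|E\|_{L^\infty} \le M_0$ and $f_0$ has velocity support in $\{|\bv| \le M_0\}$, the characteristic ODE \eqref{eq-charac} for $V$ keeps $|V(t;\bx_0,\bv_0)| \le M_0$ whenever $|\bv_0| \le M_0$ (the vector field $\lambda(E - V)$ points inward on the sphere $|V| = M_0$). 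Hence the velocity support of $g(t,\cdot,\cdot)$ stays in $\{|\bw| \le M_0\}$. Therefore $|\bj_r^\delta(t,\bx)| = |\int\int_{|\bx-\by|<r} g\,\bw\,d\bw d\by| \le M_0 \int\int_{|\bx-\by|<r} g\,d\bw d\by = M_0\,\rho_r^\delta(t,\bx) \le M_0(\delta + \rho_r^\delta(t,\bx))$, which gives $|\mathcal{F}[E]| \le M_0$ pointwise. I would also note continuity of $\mathcal{F}[E]$ on $[0,T]\times\bbr^{2d}$, which follows from $g \in C([0,T], L^1) \cap L^\infty$ together with continuity of $\bx \mapsto \int\int_{|\bx-\by|<r}(\cdots)$ and the fact that the denominator is bounded below by $\delta > 0$.

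The main obstacle — and the step deserving genuine care — is the uniform-in-$t$ Lipschitz estimate in $\bx$. I would write $\mathcal{F}[E](t,\bx_2) - \mathcal{F}[E](t,\bx_1)$ over a common denominator and estimate the two resulting differences, $|\bj_r^\delta(t,\bx_2) - \bj_r^\delta(t,\bx_1)|$ and $|\rho_r^\delta(t,\bx_2) - \rho_r^\delta(t,\bx_1)|$, each of which is an integral of $g$ over the symmetric difference of two balls $B(\bx_1,r) \triangle B(\bx_2,r)$ whose Lebesgue measure is $O(r^{d-1}|\bx_2-\bx_1|)$. Using $\|g(t)\|_{L^\infty} \le \|f_0\|_{L^\infty}e^{\lambda d t} \le \|f_0\|_{L^\infty}e^{\lambda d T}$ from Theorem 2.1(i) and the fact (again from the characteristic bound) that the $\bw$-integration is effectively over the fixed ball $\{|\bw|\le M_0\}$ of finite volume, one gets $|\rho_r^\delta(t,\bx_2)-\rho_r^\delta(t,\bx_1)| \le C|\bx_2-\bx_1|$ and similarly $|\bj_r^\delta(t,\bx_2)-\bj_r^\delta(t,\bx_1)| \le C M_0|\bx_2-\bx_1|$, with $C$ depending only on $r$, $d$, $T$, and $\|f_0\|_{L^\infty}$ — crucially \emph{independent of $t$}. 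Combining via the quotient estimate $|a_2/b_2 - a_1/b_1| \le |a_2-a_1|/b_2 + |a_1||b_2-b_1|/(b_1 b_2)$ with $b_i \ge \delta$ and $|a_i| \le M_0 b_i$, I obtain a Lipschitz constant $\le (C/\delta) + (M_0 C/\delta)$, uniform in $t$. (The dependence on $\delta$ is harmless here — $\mathcal{X}$ only requires \emph{some} uniform Lipschitz constant; it will be handled separately in the $\delta \to 0$ limit.) Assembling the $L^\infty$ bound, the continuity, and the uniform Lipschitz bound yields $\mathcal{F}[E] \in \mathcal{X}$.
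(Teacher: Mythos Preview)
Your proposal is correct and tracks the paper's proof closely: the $L^\infty$ bound via the invariance of the ball $\{|V|\le M_0\}$ under the characteristic ODE, and the spatial Lipschitz estimate via the measure of the symmetric difference $B(\bx_1,r)\triangle B(\bx_2,r)$ combined with the uniform $L^\infty$ bound on $g$ and the bounded velocity support, are exactly what the paper does in its Steps~1 and~2. Your handling of the symmetric-difference volume in one stroke is fine; the paper's case split $|\bx_1-\bx_2|\lessgtr 2r$ is only cosmetic.

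The one genuine, if minor, difference is in establishing time-continuity. You invoke $g\in C([0,T],L^1)$ from Theorem~2.1 to get continuity of $t\mapsto \bj_r^\delta,\rho_r^\delta$ qualitatively, which suffices for membership in $\mathcal{X}$. The paper instead proves a quantitative bound $|\bj_r(t_2,\bx)-\bj_r(t_1,\bx)|\le C|t_2-t_1|$ (Step~3) by substituting $\partial_t g$ from the PDE and integrating by parts. Both work here, but note that the paper's Lipschitz-in-$t$ estimate is reused in the proof of Lemma~3.4 (to upgrade a.e.\ convergence in $t$ to uniform convergence on $[0,T]$), so you would eventually need that computation anyway.
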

\begin{proof}
The proof is divided into three steps.\\
\textbf{step 1}: \quad $\|\mf [E](t,\bx)\|_{L^{\infty}([0,T] \times \bbr^{d})} \le M_0$ \\
According to Lemma 2.1, we know
\[
\text{supp}f(t,\cdot,\cdot)=\Big\{(\bx,\bv): \bx=X(t;\bx_0, \bv_0), \ \bv=V(t; \bx_0, \bv_0), where \ (\bx_0, \bv_0)\in \text{supp}f_0 \Big\}.
\]
Since
\[
  \frac{dV}{d t}= \lambda (E(t,\bx)-V) \quad \text{and} \quad \|E(t,\bx)\|_{L^{\infty}([0,T] \times \bbr^{d})} \le M_0,
\]
we have
\[
|V(t;\bx_0,\bv_0)| \le M_0, \quad \forall (\bx_0, \bv_0)\in \text{supp}f_0.
\]
Thus
\[
\|\mf [E](t,\bx)\|_{L^{\infty}([0,T] \times\bbr^{d})} \le M_0 \left\|\frac{\rho_r(t,\bx)}{\delta+\rho_r(t,\bx)}\right\|_{L^{\infty}([0,T] \times\bbr^{d})} \le M_0.
\]
\textbf{step 2}:\quad $|\mf [E](t,\bx_2)-\mf [E](t,\bx_1)| \le C |\bx_2-\bx_1|, \ \forall t \in [0,T]$\\
It is sufficient to prove
\[
  |\bj_r(t,\bx_2)-\bj_r(t,\bx_1)| \le C |\bx_2-\bx_1| \quad \text{and} \quad |\rho_r(t,\bx_2)-\rho_r
  (t,\bx_1)| \le C |\bx_2-\bx_1|.
\]
Define
\[
o(\bx_1,r)=\{\by:\ |\by-\bx_1|<r\}, \quad o(\bx_2,r)=\{\by:\ |\by-\bx_2|<r\},
\]
\[
  \Delta(\bx_1,\bx_2)=\Big(o(\bx_1,r)\setminus o(\bx_2,r)\Big)\cup\Big(o(\bx_2,r) \setminus o(\bx_1,r)\Big).
\]
(1) If $|\bx_1-\bx_2|<2 r$, we have
\begin{align*}
  |\Delta(\bx_1,\bx_2)|&\le C\left[ r^{d}-\left(r-\frac{|\bx_1-\bx_2|}{2}\right)^d\right]\\
      &\le C(r) |\bx_1-\bx_2|.
\end{align*}
Then
\begin{align*}
  |\bj_r(t,\bx_2)-\bj_r(t,\bx_1)|&=\left|\int_{o(\bx_1,r)}\int_{\bbr^{d}} g(t,\by,\bw)\bw d \bw d \by-\int_{o(\bx_2,r)}\int_{\bbr^{d}} g(t,\by,\bw)\bw d \bw d \by\right|\\
      &=\left|\int_{\Delta(\bx_1,\bx_2)}\int_{\bbr^{d}} g(t,\by,\bw)\bw d \bw d \by \right|\\
      &\le C \|f_0\|_{L^{\infty}(\bbr^{2d})} M_0^{d+1}|\Delta(\bx_1,\bx_2)| \\
      &\le C |\bx_2-\bx_1|.
\end{align*}
Similarly, we have
\[
 |\rho_r(t,\bx_2)-\rho_r(t,\bx_1)| \le C |\bx_2-\bx_1|.
\]
(2) If $|\bx_1-\bx_2| \ge 2 r$, we have
\begin{align*}
  |\bj_r(t,\bx_2)-\bj_r(t,\bx_1)|&\le |\bj_r(t,\bx_2)|+|\bj_r(t,\bx_1)| \\
      &\le C \|f_0\|_{L^{\infty}(\bbr^{2d})} M_0^{d+1} r^{d} \\
      &\le C |\bx_2-\bx_1|.
\end{align*}
Similarly, we get
\[
 |\rho_r(t,\bx_2)-\rho_r(t,\bx_1)| \le C |\bx_2-\bx_1|.
\]
Combining (1) and (2) yields the conclusion of step 2. \\
\textbf{step 3}: \quad $|\mf [E](t_2,\bx)-\mf [E](t_1,\bx)| \le C |t_2-t_1|, \ \forall t_1, t_2 \in [0,T]$\\
We only need to prove
\[
  |\bj_r(t_2,\bx)-\bj_r(t_1,\bx)| \le C |t_2-t_1| \quad \text{and} \quad |\rho_r(t_2,\bx)-\rho_r
  (t_1,\bx)| \le C |t_2-t_1|.
\]
Employing the equation \eqref{eq-linear}, we have
\begin{align*}
  &|\bj_r (t_2,\bx)-\bj_r(t_1,\bx)|\\=&\left|\int_{o(\bx,r)}\int_{\bbr^{d}}  [g(t_2,\by,\bw)-g(t_1,\by,\bw)]\bw d \bw d \by\right|\\
      =& \left|\int_{t_1}^{t_2}\int_{o(\bx,r)}\int_{\bbr^{d}} \frac{\partial g}{\partial t}\bw d \bw d \by dt\right|\\
      =& \left|\int_{t_1}^{t_2}\int_{o(\bx,r)}\int_{\bbr^{d}} \left\{-\bw \cdot \nabla_{\by}g-\lambda \nabla_{\bw} \cdot \left[\Big(E(t,\by)-\bw\Big)g\right] \right\} \bw d \bw d \by dt\right|\\
      =& \left|\int_{t_1}^{t_2}\int_{\bbr^{d}} \bw\int_{\partial o(\bx,r)} -g \bw \cdot n d \sigma d \bw  dt+\lambda d \int_{t_1}^{t_2}\int_{o(\bx,r)}\int_{\bbr^{d}}g \Big[E(t,\by)-\bw\Big]d \bw d \by dt\right|\\
      \le& C |t_2-t_1|.
\end{align*}
by direct computation. Similarly,
\[
  |\rho_r (t_2,\bx)-\rho_r(t_1,\bx)|\le C |t_2-t_1|.
\]

Combining step 2 and step 3, we know
\begin{align*}
  &|\mf [E](t_2,\bx_2)-\mf [E](t_1,\bx_1)| \\
  \le &|\mf [E](t_2,\bx_2)-\mf [E](t_2,\bx_1)|+|\mf [E](t_2,\bx_1)-\mf [E](t_1,\bx_1)|\\
  \le &C |\bx_2-\bx_1|+C |t_2-t_1|.
\end{align*}
Therefore, $\mf [E](t,\bx) \in C([0,T]\times \bbr^{d})$.
\end{proof}

Next lemma implies that $\mf$ is a continuous functional in $\mx$. It states as follows.
\begin{lemma}
Assume $\{ E_n\} \in \mx$ satisfy $\|E_n-E\|_{L^{\infty}([0,T]\times \bbr^{d})} \to 0$, as $n \to \infty$.
Then $\| \mf[E_n]-\mf[E]\|_{L^{\infty}([0,T]\times \bbr^{d})} \to 0$, as $n \to \infty$.
\end{lemma}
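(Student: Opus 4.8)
The plan is to trace how $\mf[E]$ depends on $E$ through the characteristic flow of \eqref{eq-charac}. For each $n$ let $g_n$ be the weak solution of \eqref{eq-linear} with field $E_n$ and $g$ the solution with field $E$ (understood to lie in $\mx$ with the common Lipschitz constant $K$ of the sequence $\{E_n\}$, so that $g$ is well defined), and let $(X_n,V_n)(t;\bx_0,\bv_0)$ and $(X,V)(t;\bx_0,\bv_0)$ be the corresponding characteristics; write $\bj_r^{(n)}(t,\bx)=\int_{|\bx-\by|<r}\int_{\bbr^d}g_n(t,\by,\bw)\,\bw\,d\bw d\by$ and $\rho_r^{(n)}(t,\bx)=\int_{|\bx-\by|<r}\int_{\bbr^d}g_n(t,\by,\bw)\,d\bw d\by$, so that $\mf[E_n]=\bj_r^{(n)}/(\delta+\rho_r^{(n)})$, and likewise $\bj_r,\rho_r$ for $g$. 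First I would prove closeness of the flows: since the $E_n$ share a Lipschitz constant $K$ and satisfy $\|E_n\|_{L^\infty}\le M_0$, the ODE for $(X_n,V_n)$ has globally Lipschitz, linearly growing right-hand side, so Gronwall applied to $w(t):=|X_n-X|+|V_n-V|$ (with $w(0)=0$ and $w'\le C_0w+\lambda\|E_n-E\|_{L^\infty}$) gives
\[
 \sup_{t\in[0,T]}\ \sup_{(\bx_0,\bv_0)}\ \big|(X_n,V_n)(t;\bx_0,\bv_0)-(X,V)(t;\bx_0,\bv_0)\big|\ \le\ C_T\,\|E_n-E\|_{L^\infty}\ =:\ \eta_n\ \longrightarrow\ 0 .
\]

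Next I would convert the moment differences into the measure of a thin spherical shell, using Lemma 2.1 to push the moments back onto $f_0$. Applying Lemma 2.1 with the bounded (hence $L^1_{loc}$) test functions $\varphi=\mathbf{1}_{\{|\bx-\bx_*|<r\}}$ and $\varphi=\bv^i\mathbf{1}_{\{|\bx-\bx_*|<r\}}$ gives
\begin{align*}
 \rho_r^{(n)}(t,\bx_*)&=\int_{\bbr^{2d}} f_0(\bx_0,\bv_0)\,\mathbf{1}_{\{|X_n(t;\bx_0,\bv_0)-\bx_*|<r\}}\,d\bx_0 d\bv_0,\\
 \bj_r^{(n)}(t,\bx_*)&=\int_{\bbr^{2d}} f_0(\bx_0,\bv_0)\,V_n(t;\bx_0,\bv_0)\,\mathbf{1}_{\{|X_n(t;\bx_0,\bv_0)-\bx_*|<r\}}\,d\bx_0 d\bv_0,
\end{align*}
and the same with $(X,V)$ in place of $(X_n,V_n)$. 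Since $|X_n-X|\le\eta_n$ everywhere, the difference of the two indicators is nonzero only where $X(t;\bx_0,\bv_0)$ is within distance $\eta_n$ of $\{|\by-\bx_*|=r\}$, so, applying Lemma 2.1 once more in the reverse direction,
\begin{align*}
 \int_{\bbr^{2d}} f_0\,\big|\mathbf{1}_{\{|X_n-\bx_*|<r\}}-\mathbf{1}_{\{|X-\bx_*|<r\}}\big|\,d\bx_0 d\bv_0
 &\le \int_{\{r-\eta_n<|\by-\bx_*|<r+\eta_n\}}\varrho(t,\by)\,d\by\\
 &\le \|\varrho(t)\|_{L^\infty}\,\big|\{\by:\ r-\eta_n<|\by-\bx_*|<r+\eta_n\}\big|\\
 &\le C\,\eta_n,
\end{align*}
uniformly in $t\in[0,T]$ and $\bx_*$, where $\varrho(t,\by):=\int_{\bbr^d}g(t,\by,\bw)\,d\bw$ is bounded by $\|f_0\|_{L^\infty}e^{\lambda dT}\,|\{|\bw|\le M_0\}|$ because $g(t)\le\|f_0\|_{L^\infty}e^{\lambda dt}$ and, $E$ being in $\mx$, $g(t,\cdot,\cdot)$ is supported in $\{|\bv|\le M_0\}$, exactly as in step~1 of the proof of Lemma 3.1. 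Combining this with $|V_n-V|\le\eta_n$, with $|V(t;\bx_0,\bv_0)|\le M_0$ on $\mathrm{supp}\,f_0$, and with the uniform bounds $\rho_r^{(n)},\rho_r,|\bj_r^{(n)}|,|\bj_r|\le C$, I would conclude $\|\rho_r^{(n)}-\rho_r\|_{L^\infty}+\|\bj_r^{(n)}-\bj_r\|_{L^\infty}\le C\eta_n$.

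Finally I would use the elementary identity
\[
 \mf[E_n]-\mf[E]=\frac{\delta(\bj_r^{(n)}-\bj_r)+(\bj_r^{(n)}-\bj_r)\rho_r+\bj_r(\rho_r-\rho_r^{(n)})}{(\delta+\rho_r^{(n)})(\delta+\rho_r)},
\]
whose denominator is $\ge\delta^2$ and whose numerator is $\le C(\delta+1)\eta_n$ in $L^\infty$, so that
\[
 \|\mf[E_n]-\mf[E]\|_{L^\infty([0,T]\times\bbr^d)}\ \le\ \frac{C_T(\delta+1)}{\delta^2}\,\|E_n-E\|_{L^\infty}\ \longrightarrow\ 0,
\]
which is the claim (the bound degenerates as $\delta\to0$, but $\delta>0$ is fixed here). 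I expect the second step to be the main obstacle: one should \emph{not} try to estimate $\|g_n-g\|_{L^1}$ directly, since there is no Lipschitz-in-$E$ bound for it when $f_0$ is merely $L^1$; the point is to transfer the whole computation onto the characteristic flow via Lemma 2.1 and to absorb the error created by the moving observation ball $\{|\bx-\by|<r\}$ into the measure of a thin shell, which is controlled only because the macroscopic density $\varrho$ is uniformly bounded — and that in turn relies on the velocity support staying inside $\{|\bv|\le M_0\}$.
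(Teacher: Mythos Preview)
Your proposal is correct and follows essentially the same route as the paper: push the moments back to $f_0$ via Lemma~2.1, estimate $|(X_n,V_n)-(X,V)|$ from the characteristic ODEs, and control the resulting error by the measure of a thin region near $\{|\by-\bx_*|=r\}$. Your execution of the last step---pushing the indicator difference forward again to bound it by $\|\varrho(t)\|_{L^\infty}$ times the volume of a spherical shell---is a slightly cleaner substitute for the paper's direct estimate $|\Delta(U_r^n,U_r)|\le |\partial(U_r^n\cap U_r)|\cdot\lambda T\varepsilon$ in initial-data coordinates, but the two arguments are equivalent.
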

\begin{proof}
We only need to prove
\[
 \|\bj_r^n(t,\bx)-\bj_r(t,\bx)\|_{L^{\infty}([0,T]\times \bbr^{d})} \to 0, \quad \text{as} \ n \to \infty
\]
and
\[
 \|\rho_r^n(t,\bx)-\rho_r(t,\bx)\|_{L^{\infty}([0,T]\times \bbr^{d})} \to 0, \quad \text{as} \ n \to \infty.
\]
Define
\[
 U_{r}^{n}(t,\bx)=\{(\by_0,\bw_0):\ \Big(Y_n(t;\by_0,\bw_0),W_n(t;\by_0,\bw_0)\Big)\subseteq o(\bx,r)\times \text{supp}_{\bv}g^{n}(t,\cdot,\cdot)\},
\]
\[
  U_{r}(t,\bx)=\{(\by_0,\bw_0):\ \Big(Y(t;\by_0,\bw_0),W(t;\by_0,\bw_0)\Big)\subseteq o(\bx,r)\times \text{supp}_{\bv}g(t,\cdot,\cdot)\},
\]
and
\[
  \Delta( U_{r}^n, U_{r})=(U_{r}^n \setminus U_{r})\cup(U_{r} \setminus U_{r}^n).
\]
Using Lemma 2.1, we obtain
\begin{equation} \label{eq-estim-differ-j}
\begin{aligned}
  &|\bj_r^n(t,\bx)-\bj_r(t,\bx)|\\
  =&\left|\int_{o(\bx,r)} \int_{\bbr^d} g^n(t,\by,\bw)\bw d\bw d\by-\int_{o(\bx,r)} \int_{\bbr^d} g(t,\by,\bw)\bw d\bw d\by \right|\\
  =&\left|\int_{\Delta( U_{r}^n, U_{r})} f_0(\by_0,\bw_0)W_n(t;\by_0,\bw_0)d\bw_0 d\by_0+\int_{U_{r}} f_0(\by_0,\bw_0)(W_n-W)d\bw_0 d\by_0\right|\\
  \le& C|\Delta( U_{r}^n, U_{r})|+C |W_n-W|.
\end{aligned}
\end{equation}
Employing the characteristic equation \eqref{eq-charac}, we have
\begin{equation*}
\begin{dcases}
  \frac{d (Y_n-Y)}{d s}=W_n-W, \\
  \frac{d (W_n-W)}{d s}= \lambda [E_n-E-(W_n-W)],
\end{dcases}
\end{equation*}
\[
  (Y_n-Y)|_{s=t}=0, \quad (W_n-W)|_{s=t}=0.
\]
If $\forall \e>0$, $\|E_n-E\|_{L^{\infty}([0,T]\times \bbr^{d})} < \e$, then a simple computation yields
\begin{equation} \label{eq-diff-w-initi}
|W_{n0}-W_0|\le \lambda \e \quad \text{and} \quad |Y_{n0}-Y_0|\le \lambda T\e.
\end{equation}
Since
\begin{equation} \label{eq-volu-delta-u}
 |\Delta( U_{r}^n, U_{r})|\le |\partial(U_r^n \cap U_r)| \cdot \lambda T\e \le C \e,
\end{equation}
Combining \eqref{eq-estim-differ-j}, \eqref{eq-diff-w-initi} and \eqref{eq-volu-delta-u}, we obtain
\[
 \|\bj_r^n(t,\bx)-\bj_r(t,\bx)\|_{L^{\infty}([0,T]\times \bbr^{d})} \le C \e.
\]
Similarly, we get
\[
 \|\rho_r^n(t,\bx)-\rho_r(t,\bx)\|_{L^{\infty}([0,T]\times \bbr^{d})} \le C \e.
\]
\end{proof}

The following lemma is the famous velocity averaging lemma. We mainly use it to get some compactness of the approximate solutions. For the detailed proof, we refer the reader to \cite{diperna1989global}.
\begin{lemma}[DiPerna and Lions 1989]
Let $m \ge 0$, $f, g \in L^2(\bbr_+\times \bbr^{2d})$ and $f(t,\bx,\bv), g(t,\bx,\bv)$ satisfy
\[
  \frac{\partial f}{\partial t}+\bv \cdot \nabla_{\bx} f=\nabla_{\bv}^{\xi}g \quad \text{in} \ \mathcal{D}'\Big((0,T)\times \bbr^{2d}\Big),
\]
where $\nabla_{\bv}^{\xi}=\partial_{\bv^1}^{\xi^1}\partial_{\bv^2}^{\xi^2}\cdots \partial_{\bv^d}^{\xi^d}$ and $|\xi|= \sum_{i=1}^{d} \xi^i=m$. Then for any $\varphi(\bv) \in C_0^{\infty}(\bbr^{d})$, it holds that
\[
 \left\|\int_{\bbr^{d}}f(t,\bx,\bv) \varphi(\bv)d \bv\right\|_{H^s(\bbr_+\times \bbr^{d})}\le C \left(\|f\|_{L^2(\bbr_+\times \bbr^{2d})}+\|g\|_{L^2(\bbr_+\times \bbr^{2d})} \right),
\]
where $s=\frac{1}{2(1+m)}$ and $C$ is a positive constant.
\end{lemma}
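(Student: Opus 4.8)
The plan is to pass to the Fourier transform in the $(t,\bx)$ variables and to exploit the dichotomy between the velocities at which the symbol $\tau+\bv\cdot\eta$ of the free transport operator is small (``resonant'') and those at which it is bounded away from zero. After a routine reduction to $t\in\bbr$ (the half-line is handled by a smooth temporal cutoff, whose commutator produces only a zeroth-order $m=0$ source that the same argument controls), I let $\tau$ be dual to $t$ and $\eta\in\bbr^d$ dual to $\bx$, leaving $\bv$ untouched. Writing $\hat f(\tau,\eta,\bv)$, $\hat g(\tau,\eta,\bv)$ for the transforms, the equation becomes the pointwise-in-frequency identity
\[
 i\,(\tau+\bv\cdot\eta)\,\hat f(\tau,\eta,\bv)=\nabla_{\bv}^{\xi}\hat g(\tau,\eta,\bv),
\]
while the velocity average $\rho_\varphi:=\int f\varphi\,d\bv$ satisfies $\widehat{\rho_\varphi}(\tau,\eta)=\int_{\bbr^d}\hat f(\tau,\eta,\bv)\varphi(\bv)\,d\bv$. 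By Plancherel it suffices to establish, with $\Lambda:=(1+\tau^2+|\eta|^2)^{1/2}$, the frequency-localized estimate
\[
 \Lambda^{2s}\,|\widehat{\rho_\varphi}(\tau,\eta)|^2\lesssim \int_{\bbr^d}|\hat f|^2\,d\bv+\int_{\bbr^d}|\hat g|^2\,d\bv
\]
and then integrate in $(\tau,\eta)$.

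The core of the argument is a splitting of the velocity integral at a frequency-dependent scale $\delta=\delta(\eta)>0$. Fixing $\chi\in C_0^\infty(\bbr)$ with $\chi\equiv1$ near the origin and setting $\psi_\delta(\bv)=\chi\big((\tau+\bv\cdot\eta)/\delta\big)$, I write $\widehat{\rho_\varphi}=I_1+I_2$ with $I_1=\int\hat f\,\varphi\psi_\delta\,d\bv$ and $I_2=\int\hat f\,\varphi(1-\psi_\delta)\,d\bv$. For the resonant part, Cauchy--Schwarz gives $|I_1|^2\le\big(\int|\varphi\psi_\delta|^2\,d\bv\big)\int|\hat f|^2\,d\bv$; since $\psi_\delta$ confines $\bv$ to a slab of thickness $\sim\delta/|\eta|$ meeting the compact set $\mathrm{supp}\,\varphi$, the prefactor is $\lesssim\delta/|\eta|$. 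For the non-resonant part, $1-\psi_\delta$ vanishes wherever the symbol is small, so $\Psi_\delta:=(1-\psi_\delta)/\big[i(\tau+\bv\cdot\eta)\big]$ is smooth and compactly supported in $\bv$; inserting the equation and integrating by parts $m$ times (with no boundary terms) yields $I_2=(-1)^m\int\hat g\,\nabla_{\bv}^{\xi}(\varphi\Psi_\delta)\,d\bv$. Each $\bv$-derivative costs a factor $\lesssim|\eta|/|\tau+\bv\cdot\eta|$, and the top-order Leibniz term $\beta=\xi$ dominates because $\delta\le|\eta|$; Cauchy--Schwarz together with the substitution $\sigma=\tau+\bv\cdot\eta$ (so that $d\sigma\sim|\eta|\,dv_\parallel$ and the perpendicular directions are confined to $\mathrm{supp}\,\varphi$) then gives $|I_2|^2\lesssim|\eta|^{2m-1}\,\delta^{-(2m+1)}\int|\hat g|^2\,d\bv$.

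Combining the two pieces in the main regime $|\eta|\sim\Lambda$ gives
\[
 |\widehat{\rho_\varphi}|^2\lesssim \frac{\delta}{|\eta|}\int|\hat f|^2\,d\bv+\frac{|\eta|^{2m-1}}{\delta^{2m+1}}\int|\hat g|^2\,d\bv,
\]
and the choice $\delta\sim|\eta|^{m/(1+m)}$ makes both $|\eta|$-powers equal to $|\eta|^{-1/(1+m)}=|\eta|^{-2s}$, exactly what the weight $\Lambda^{2s}$ can absorb. This forced balance is where I expect the main obstacle to lie and where the value $s=\tfrac{1}{2(1+m)}$ is pinned down: the resonant term wants $\delta$ small to shrink the slab, while the non-resonant term wants $\delta$ large to stay away from the singular symbol, and only the sharp trade-off between these competing demands yields the fractional gain with the correct exponent. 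The remaining frequency regimes are easy and require no optimization: if $|\tau|\ge 2R|\eta|$ (with $R$ the radius of $\mathrm{supp}\,\varphi$) the symbol is non-resonant for every $\bv$, so a single direct integration by parts gives $|\widehat{\rho_\varphi}|\lesssim|\tau|^{-1}\|\hat g\|_{L^2_{\bv}}$, which more than suffices since $s<1$; and if $\Lambda\lesssim1$ the trivial bound $|\widehat{\rho_\varphi}|^2\le\|\varphi\|_{L^2}^2\int|\hat f|^2\,d\bv$ closes the estimate. Summing the three regimes and integrating in $(\tau,\eta)$ delivers the lemma. Throughout, the compact support of $\varphi$ and the $L^2$ hypotheses on $f,g$ enter essentially in every measure estimate, whereas the Leibniz bookkeeping and the temporal extension are routine.
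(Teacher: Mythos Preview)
Your argument is the standard Fourier-analytic proof of the DiPerna--Lions velocity averaging lemma, and it is correct: the resonant/non-resonant splitting at scale $\delta$, the slab measure bound for $I_1$, the $m$-fold integration by parts for $I_2$, and the optimization $\delta\sim|\eta|^{m/(1+m)}$ are exactly the ingredients that yield $s=\tfrac{1}{2(1+m)}$.

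Note, however, that the paper does not actually prove this lemma. It is stated as Lemma~3.3 with the remark ``For the detailed proof, we refer the reader to \cite{diperna1989global}'' and is used as a black box to obtain $H^{1/4}$ regularity of velocity moments in the proof of Lemma~3.4. So there is nothing to compare against: you have supplied a self-contained proof where the paper simply cites the literature. Your sketch is faithful to the original DiPerna--Lions argument and would be an appropriate expansion if one wanted the paper to be self-contained on this point.
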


This lemma is used to prove that $\mf$ is compact. Using the fact that the velocity support is uniformly bounded for the linear equation if it is bounded initially, we remove the constraint $|\bx|^2f_0(\bx,\bv) \in L^1(\bbr^{2d})$ in \cite{karper2013existence}.
\begin{lemma}
Assume $\{E^n\} \subseteq \mx$. Then there exists a subsequence still denoted by $\{E^n\}$ such that $\| \mf [E^n]-\mf [E]\|_{L^{\infty}([0,T]\times \bbr^{d})} \to 0$ as $n \to \infty$.
\end{lemma}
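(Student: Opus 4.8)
The plan is to reduce the statement to relative compactness of $\{\mf[E^n]\}$ in $C([0,T]\times\bbr^{d})$ with the sup norm, and then identify the limit. By Lemma 3.1 every $\mf[E^n]$ lies in $\mx$ and is bounded by $M_0$; moreover, inspecting the constants in the proof of Lemma 3.1 (steps 2 and 3), each $\mf[E^n]$ is Lipschitz in $\bx$ and in $t$ with a constant $C$ depending on $\lambda,r,M_0,\|f_0\|_{L^\infty}$ and $\delta$ but \emph{not} on $n$. Thus $\{\mf[E^n]\}$ is uniformly bounded and equi-Lipschitz, so Arzel\`a--Ascoli on each cylinder $[0,T]\times\overline{B_k}$ together with a diagonal extraction (subsequence still denoted $\{E^n\}$) produces a function $G$ with $\mf[E^n]\to G$ uniformly on every compact subset of $[0,T]\times\bbr^{d}$. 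Alternatively, one can reach the same local compactness through the velocity averaging Lemma 3.3: since $\operatorname{supp}_{\bv}g^n\subseteq B_{M_0}$ and $\|g^n\|_{L^\infty},\|g^n\|_{L^1}$ are bounded uniformly in $n$, the right-hand side $-\lambda\nabla_{\bv}\cdot[(E^n-\bv)g^n]$ is bounded in $L^2$, hence $\rho^n=\int g^n d\bw$ and $\bj^n=\int g^n\bw\, d\bw$ (written with a smooth $\bv$-cutoff) are bounded in $H^{1/4}((0,T)\times\bbr^{d})$; convolving in $\bx$ with $\chi_{B_r}$ preserves this bound, and $H^{1/4}\hookrightarrow L^2_{loc}$ compactly then gives a.e.\ convergence of $\rho_r^n,\bj_r^n$, hence of $\mf[E^n]=\bj_r^n/(\delta+\rho_r^n)$.

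The genuinely new ingredient, and the one replacing the hypothesis $|\bx|^2f_0\in L^1$ of \cite{karper2013existence}, is a \emph{uniform} decay of $\mf[E^n]$ as $|\bx|\to\infty$. Since the velocity support of $g^n$ is contained in $B_{M_0}$ (Lemma 3.1, step 1), $|\bj_r^n(t,\bx)|\le M_0\rho_r^n(t,\bx)$, so $|\mf[E^n](t,\bx)|\le \tfrac{M_0}{\delta}\rho_r^n(t,\bx)\le \tfrac{M_0}{\delta}\int_{|\by|>|\bx|-r}\rho^n(t,\by)\,d\by$. I would then apply Lemma 2.1 with $\varphi(\by,\bw)=\mathbf 1_{\{|\by|>|\bx|-r\}}$ (and $\Omega=\bbr^{2d}$, or a truncation passing to the limit), which gives
\[
\int_{|\by|>|\bx|-r}\rho^n(t,\by)\,d\by=\int_{\{|Y_n(t;\by_0,\bw_0)|>|\bx|-r\}}f_0(\by_0,\bw_0)\,d\by_0\,d\bw_0 .
\]
Because $|\bw_0|\le M_0$ on $\operatorname{supp}f_0$ and $|W_n(s;\by_0,\bw_0)|\le M_0$ for all $s$ (again Lemma 3.1, step 1), the displacement satisfies $|Y_n(t;\by_0,\bw_0)-\by_0|\le M_0T$, so the right-hand side is at most $\int_{|\by_0|>|\bx|-r-M_0T}f_0\,d\by_0\,d\bw_0=:\eta(|\bx|)$. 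Since $f_0\in L^1(\bbr^{2d})$ we have $\eta(R)\to0$ as $R\to\infty$, and $\eta$ is independent of $n$ and of $t\in[0,T]$; the same bound passes to the limit, so $|G(t,\bx)|\le\tfrac{M_0}{\delta}\eta(|\bx|)$ as well.

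Combining the two facts closes the argument: given $\varepsilon>0$, pick $R$ with $\tfrac{2M_0}{\delta}\eta(R)<\varepsilon/2$, so that
\[
\|\mf[E^n]-G\|_{L^\infty([0,T]\times\bbr^{d})}\le \|\mf[E^n]-G\|_{L^\infty([0,T]\times\overline{B_R})}+\tfrac{2M_0}{\delta}\eta(R),
\]
and the first term is $<\varepsilon/2$ for $n$ large by the locally uniform convergence. Finally, $G\in\mx$ by the uniform bounds of Lemma 3.1, and one writes $G=\mf[E]$ by passing to a further subsequence along which $E^n$ converges (e.g.\ weak-$*$ in $L^\infty$, or uniformly on compacts if the whole fixed-point scheme is run on the compact convex subset of $\mx$ cut out by Lemma 3.1) and invoking the continuity Lemma 3.2; in any case only the relative compactness just proved is needed for the Schauder argument.

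I expect the main obstacle to be exactly the uniform spatial decay of the second paragraph: one must control the spreading of the $\bx$-support of $g^n$ by $M_0T$ — which is possible only because the uniform velocity bound bounds $|W_n|$, hence $|Y_n(t;\by_0,\bw_0)-\by_0|$ — and then close against $f_0\in L^1$. Without the bounded velocity support this displacement estimate fails and one is pushed back to a weighted assumption such as $|\bx|^2f_0\in L^1$. The equicontinuity and Arzel\`a--Ascoli part is routine once the Lemma 3.1 constants are seen to be $n$-independent.
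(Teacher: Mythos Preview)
Your argument is correct, and the tail estimate in your second paragraph is exactly the paper's idea: both control $\int_{|\by|>R}\rho^{n}(t,\by)\,d\by$ by pushing back along characteristics via Lemma~2.1 and using $|Y_n(t;\by_0,\bw_0)-\by_0|\le M_0T$, which is the step that replaces the weighted hypothesis $|\bx|^2 f_0\in L^1$. Where you diverge from the paper is in the \emph{local} compactness step. You obtain it by Arzel\`a--Ascoli, reading off equi-Lipschitz bounds from Lemma~3.1 with $n$-independent constants; the paper instead goes through the velocity averaging Lemma~3.3 to get $\bj^n,\rho^n$ bounded in $H^{1/4}$, uses the compact embedding $H^{1/4}\hookrightarrow L^1_{\mathrm{loc}}$, and then upgrades the resulting $L^1$-in-$(t,\bx)$ convergence of $\bj_r^n,\rho_r^n$ to uniform convergence via their Lipschitz-in-$t$ regularity. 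Your route is more elementary for fixed $\delta$, and in fact avoids Lemma~3.3 altogether. The payoff of the paper's route is that the $H^{1/4}$ bound and the $L^1_{\mathrm{loc}}$ compactness do not see $\delta$, so the same machinery is recycled verbatim in Section~4 when $\delta\to 0$; your Arzel\`a--Ascoli argument would not transfer there because the Lipschitz constant of $\bj_r/(\delta+\rho_r)$ blows up like $1/\delta$. Finally, your remark that only relative compactness of $\{\mf[E^n]\}$ is needed for the Schauder step is the right reading: the paper's ``$\mf[E]$'' is built from the weak $L^2$-limit $g$ of $g^n$, not from a limit field $E\in\mx$, so you need not invoke Lemma~3.2 to identify the limit.
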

\begin{proof}
We only need to prove
\[
 \bj_r^n(t,\bx) \to \bj_r(t,\bx)\ \text{and} \ \rho_r^n(t,\bx) \to \rho_r(t,\bx) \quad \text{uniformly in} \ [0,T] \times \bbr^{d},
\]
as $n \to \infty$.

For any $\e >0$, there exists a ball $B(R)$ such that
\[
  \int_{\bbr^d\setminus B(R)}f_0(\bx,\bv)d \bv d\bx <\e.
\]
Employing Lemma 2.1, we have
\begin{equation} \label{eq-tail}
\begin{aligned}
  &\left| \int_0^T \int_{\bbr^d\setminus B(R+M_0T)}\int_{\bbr^{d}} g^n(t,\bx,\bv)\bv d\bv d\bx dt\right|\\
  \le &M_0 \left| \int_0^T \int_{\bbr^d\setminus B(R)}\int_{\bbr^{d}} f_0(\bx,\bv) d\bv d\bx dt\right|\\
  \le &M_0 T \e.
\end{aligned}
\end{equation}

Since
\[
 \frac{\partial g^n}{\partial t} + \bv \cdot \nabla_{\bx} g^n =- \lambda \nabla_{\bv} \cdot[(E^n(t,\bx)-\bv)g^n] \quad \text{in} \ \mathcal{D}'((0,T)\times \bbr^{2d}),
\]
and
\[
 \|g^n\|_{L^{2}([0,T] \times \bbr^{2d})} \le C, \quad \|(E^n(t,\bx)-\bv)g^n\|_{L^{2}([0,T] \times \bbr^{2d})} \le C,
\]
Using Lemma 3.3, we get
\[
 \left\|\int_{\bbr^{d}}g^n(t,\bx,\bv) \bv d \bv \right\|_{H^{\frac14}([0,T]\times \bbr^{d})} \le C,
\]
where we have used the fact that the velocity support of $g^n$ is uniformly bounded for $t \in [0,T]$.
Since
\begin{equation} \label{eq-embed-comp}
  H^{\frac14}\left([0,T]\times B(R+M_0 T)\right) \hookrightarrow \hookrightarrow L^1\left([0,T]\times B(R+M_0 T)\right),
\end{equation}
combining \eqref{eq-tail}, we know  there exists a subsequence still denoted by $\bj^n$ such that
\begin{equation} \label{eq-comp-imb-glob}
 \int_0^T \int_{\bbr^{d}}|\bj^n(t,\bx)-\bj(t,\bx)|d \bx dt \to 0, \quad \text{as} \ n \to \infty,
\end{equation}
where
\[
 \bj^n(t,\bx)=\int_{\bbr^{d}}g^n(t,\bx,\bv) \bv d \bv \quad \text{and} \quad
 \bj(t,\bx)=\int_{\bbr^{d}}g(t,\bx,\bv) \bv d \bv.
\]
In the above equation, $g$ is the weak limit of $g^n$ in $L^{2}([0,T] \times \bbr^{2d})$.
By the definition of $\bj_r^n(t,\bx)$ and $\bj_r(t,\bx)$, we have
\begin{align*}
 |\bj_r^n(t,\bx)-\bj_r(t,\bx)| &\le \int_{o(\bx,r)}|\bj^n(t,\by)-\bj(t,\by)| d \by\\
         &\le \int_{\bbr^{d}}|\bj^n(t,\by)-\bj(t,\by)| d \by, \quad \forall \bx \in \bbr^{d}.
\end{align*}
From \eqref{eq-comp-imb-glob}, we know there exists a further subsequence such that
\[
 |\bj_r^n(t,\bx)-\bj_r(t,\bx)| \to 0 \quad \text{for} \ a.e. \ t \in [0,T]
\]
uniformly with respect to $\bx$, as $n \to \infty$. Using the fact that
\[
 |\bj_r^n(t_2,\bx)-\bj_r^n(t_1,\bx)|\le C |t_2-t_1| \quad \text{and} \quad |\bj_r(t_2,\bx)-\bj_r(t_1,\bx)|\le C |t_2-t_1|,
\]
we know
\[
 \bj_r^n(t,\bx)\to \bj_r(t,\bx) \quad \text{uniformly in}\ [0,T]\times \bbr^{d}, \ \text{as}\ n \to \infty.
\]
Similarly, we get
\[
 \rho_r^n(t,\bx)\to \rho_r(t,\bx) \quad \text{uniformly in}\ [0,T]\times \bbr^{d}, \ \text{as}\ n \to \infty
\]
and then conclude the proof.
\end{proof}

With the help of the above lemmas, we can easily present the proof of Theorem 3.1, by using the Schauder fixed point theorem and our analysis on linear equation.
\vskip 0.3cm
\noindent \textit{Proof of Theorem 3.1.} Since $\mx$ is convex and bounded. Using Lemma 3.1 and Lemma 3.4, we know $\mf \mx$ is convex and compact, and $\mf \mx \subseteq \mx$. Thus $\mf (\mf \mx )\subseteq \mf \mx$. Using the Schauder fixed theorem, we know there is a fixed point in $\mf \mx$. Therefore, \eqref{eq-appro-cut-kine} has a weak solution.

Based on our analysis on linear equation, we know Theorem 2.1 (i), (ii) hold for every $E(t,\bx) \in \mx$. Especially for the fixed point, we have Theorem 3.1 (i) and (iii). From the step 1 of Lemma 3.1, we know Theorem 3.1 (ii) holds. Thus we complete the proof of Theorem 3.1.
%
%
\section{Existence of Weak Solution for the Original Equation}
\setcounter{equation}{0}
In this section, we will recover the weak solution of \eqref{eq-cut-kine} by taking weak limit to the approximate solutions of \eqref{eq-appro-cut-kine}.
\vskip 0.3cm
\noindent \textit{Proof of Theorem 1.1.} From Theorem 3.1, we know there exists a sequence $f^{\delta}(t,\bx,\bv)$ such that
\begin{equation} \label{eq-f-weak-conv}
 f^{\delta}(t,\bx,\bv) \rightharpoonup f(t,\bx,\bv) \quad \text{weakly in}\ L^2([0,T]\times \bbr^{2d}).
\end{equation}
Since $\|f^{\delta} \bu^{\delta}\|_{L^2([0,T]\times \bbr^{2d})} \le C$, there also exists a subsequence
\[
 f^{\delta}\bu^{\delta} \rightharpoonup \bm \quad \text{weakly in}\ L^2([0,T]\times \bbr^{2d}).
\]

We only need to prove $\bm= f \bu$. Following the proof of Lemma 3.4, we know
\begin{equation} \label{eq-vel-aver-conve}
 \int_{\bbr^d} f^{\delta}(t,\bx,\bv)\varphi(\bv) d\bv \to \int_{\bbr^d} f(t,\bx,\bv)\varphi(\bv) d\bv, \quad \forall \varphi(\bv) \in C_{0}^{\infty}(\bbr^d)
\end{equation}
and for a.e. $(t,\bx)\in [0,T]\times \bbr^d$, as $\delta \to 0$. By the definition
\[
\bj_r^{\delta}(t,\bx)=\int_{o(\bx,r)} \int_{\bbr^d} f^{\delta}(t,\by,\bw)\bw d\bw d\by, \quad \rho_r^{\delta}(t,\bx)=\int_{o(\bx,r)} \int_{\bbr^d} f^{\delta}(t,\by,\bw) d\bw d\by,
\]
then the Lebesgue dominated convergence theorem yields
\begin{equation} \label{eq-j-conve}
 \bj_r^{\delta}(t,\bx) \to \bj_r(t,\bx) \quad a.e. \ \text{in} \ [0,T]\times \bbr^d, \ \text{as}\ \delta \to 0,
\end{equation}
and
\begin{equation} \label{eq-rho-conve}
 \rho_r^{\delta}(t,\bx) \to \rho_r(t,\bx) \quad a.e. \ \text{in} \ [0,T]\times \bbr^d, \ \text{as}\ \delta \to 0.
\end{equation}
Define
\[
 A=\{(t,\bx):\ \rho_r(t,\bx)=0\},\quad B=\{(t,\bx):\ \rho_r(t,\bx)>0\}.
\]
By the definition of $A$, we know $A\subseteq [0,T]\times \bbr^d \setminus \text{supp} f(\cdot,\cdot,\bv)$ for any $\bv \in \bbr^d$. Combining the fact $|\bu^{\delta}| \le M_0$ and \eqref{eq-vel-aver-conve}, the Lebesgue dominated convergence theorem yields
\begin{equation} \label{eq-conve-A}
 \int_A \int_{\bbr^d} f^{\delta}(t,\bx,\bv)\varphi(\bv) d\bv \phi(t,\bx)\bu^{\delta}d \bx dt \to 0,
\end{equation}
for any $\varphi(\bv) \in C_{0}^{\infty}(\bbr^d)$, $\phi(t,\bx) \in C_{0}^{\infty}((0,T)\times \bbr^d )$, as $\delta \to 0$. Using the definition of $\bu(t,\bx)$ in \eqref{eq-def-bu}, we also have
\begin{equation}
 \int_A \int_{\bbr^d} f(t,\bx,\bv)\varphi(\bv) d\bv \phi(t,\bx)\bu d \bx dt = 0,
\end{equation}
for any $\varphi(\bv) \in C_{0}^{\infty}(\bbr^d)$, $\phi(t,\bx) \in C_{0}^{\infty}((0,T)\times \bbr^d )$. Thus
\begin{equation} \label{eq-hold-A}
 \lim_{\delta \to 0}\int_A \int_{\bbr^d} f^{\delta}(t,\bx,\bv)\bu^{\delta} \varphi(\bv)\phi(t,\bx) d\bv d \bx dt = \int_A \int_{\bbr^d} f(t,\bx,\bv)\bu\varphi(\bv) \phi(t,\bx) d\bv d \bx dt,
\end{equation}
for all $\varphi(\bv) \in C_{0}^{\infty}(\bbr^d)$ and $\phi(t,\bx)\in C_{0}^{\infty}((0,T)\times \bbr^d )$.

For any $(t,\bx) \in B$, combining \eqref{eq-j-conve}, \eqref{eq-rho-conve} and the definition of $\bu$ give
\[
 \bu^{\delta}(t,\bx) \to \bu(t,\bx) \quad a.e. \ \text{in} \ B, \ \text{as} \ \delta \to 0.
\]
Then the Lebesgue dominated convergence theorem leads to
\begin{equation} \label{eq-hold-B}
 \lim_{\delta \to 0}\int_B \int_{\bbr^d} f^{\delta}(t,\bx,\bv)\bu^{\delta}\varphi(\bv) \phi(t,\bx) d\bv d \bx dt = \int_B \int_{\bbr^d} f(t,\bx,\bv)\bu\varphi(\bv) \phi(t,\bx) d\bv d \bx dt,
\end{equation}
for all $\varphi(\bv) \in C_{0}^{\infty}(\bbr^d)$ and $\phi(t,\bx)\in C_{0}^{\infty}((0,T)\times \bbr^d )$.
Combining \eqref{eq-hold-A} and \eqref{eq-hold-B}, we have
\begin{equation} \label{eq-hold-AB}
 \lim_{\delta \to 0}\int_0^T \int_{\bbr^{2d}} f^{\delta}(t,\bx,\bv) \bu^{\delta}\varphi(\bv)\phi(t,\bx) d\bv  d \bx dt = \int_0^T \int_{\bbr^{2d}} f(t,\bx,\bv)\bu\varphi(\bv) \phi(t,\bx) d\bv d \bx dt,
\end{equation}
for all $\varphi(\bv) \in C_{0}^{\infty}(\bbr^d)$ and $\phi(t,\bx)\in C_{0}^{\infty}((0,T)\times \bbr^d )$.
Using the density of the sums and products of the form $\varphi(\bv)\phi(t,\bx)$ in $C_{0}^{\infty}((0,T)\times \bbr^{2d}$, we get
\[
 f^{\delta} \bu^{\delta} \to f \bu \quad \text{in}\ \mathcal{D}'((0,T)\times \bbr^{2d}),\ \text{as} \ \delta \to 0.
\]
Thus $f$ is a weak solution of \eqref{eq-cut-kine}. Employing \eqref{eq-f-weak-conv} and Theorem 3.1, it is easy to see Theorem 1.1 (i), (ii) and (iii) hold. This completes the proof.
%
%
\section{Conclusion}
In this paper, we just prove the existence of weak solutions, while the uniqueness is a remaining unsolved problem. The rigorous derivation of the kinetic model is also a challenging question. These issues are beyond the scope of our paper.

From a modeling perspective, there are many other factors that are not included in our model. The most meaningful is to add noise to the model. It will lead to the addition of a Laplace term in the equations. Whether we can establish the global well-posedness of the solution around the equilibrium state or not as in \cite{duan2010kinetic} is also an interesting question.

From a theoretical point of view, the derivation of the fluid model from the kinetic model can also been done following formal arguments. A very recent trend of research has been launched in these directions. We refer the reader to \cite{bae2012time}\cite{bae2014asymptotic}\cite{bae2014global}\cite{canizo2011well}
\cite{carrillo2010asymptotic}\cite{carrillo2014derivation}\cite{degond2008continuum}\cite{degond2010diffusion}\cite{eftimie2012hyperbolic}\cite{ha2014global}
\cite{ha2014hydrodynamic}\cite{ha2008From}\cite{haskovec2013flocking}\cite{karper2015hydrodynamic}. But the analysis, asymptotic behavior and the stability of many of these models still remain unexplored. For further reference to the state of the art in this interesting topic, we refer the reader to the survey paper \cite{carrillo2010particle} for the recent results in this territory.

\bibliographystyle{plain}
\nocite{*}

\end{document}